\newtheorem{thm}{Theorem}[section]
\newtheorem{lem}[thm]{Lemma}
\theoremstyle{definition}
\theoremstyle{remark}
\numberwithin{equation}{section}
\title{Singular perturbation of reduced wave equation and scattering from an embedded obstacle}
\author{Hongyu Liu\thanks{Department of
Mathematics and Statistics, University of North Carolina, Charlotte,
NC 28263, USA. ({\tt hliu28@uncc.edu})}\and Zaijiu Shang\thanks{Institute of Mathematics, Academy of Mathematics and
Systems Science, Chinese Academy of Sciences, Beijing 100190, P. R. The work of this author was partially supported by grant under NSF No.10990012. ({\tt zaijiu@amss.ac.cn})} \and  Hongpeng
Sun\thanks{Institute of Mathematics, Academy of Mathematics and
Systems Science, Chinese Academy of Sciences, Beijing 100190, P. R.
China. The work of this author was partially supported by grant under NSF No.10990012. ({\tt hpsun@amss.ac.cn})}\and Jun Zou\thanks{Department of Mathematics, Chinese University of Hong Kong, Shatin, N. T., Hong Kong.
The work of this author was substantially
supported by Hong Kong RGC grants (Projects 405110 and
404611).
({\tt zou@math.cuhk.edu.hk})} }
\begin{document}

\maketitle

\begin{abstract}
We consider time-harmonic wave scattering from an inhomogeneous isotropic medium supported in a bounded domain $\Omega\subset\mathbb{R}^N$ ($N\geq 2$).
{In a subregion $D\Subset\Omega$, the medium is supposed to be lossy and have a large mass density. We study the asymptotic development of the wave field as the mass density $\rho\rightarrow +\infty$}
and show that the wave field inside $D$ will decay exponentially while the wave filed outside the medium will converge to the one corresponding to a sound-hard obstacle $D\Subset\Omega$ buried in the medium supported in $\Omega\backslash\overline{D}$. Moreover, the normal velocity of the wave field on $\partial D$ from outside $D$ is shown to be vanishing as $\rho\rightarrow +\infty$.
{We derive very accurate estimates for the wave field inside and outside $D$ and on $\partial D$ in terms of $\rho$, and
show that the asymptotic estimates are sharp.
The implication of the obtained results is given for an inverse scattering problem
of reconstructing a complex scatterer.}
\end{abstract}

\pagestyle{myheadings}
\thispagestyle{plain}
\markboth{Hongyu Liu, Zaijiu Shang, Hongpeng Sun and Jun Zou}{Singular perturbation of the reduced wave equation}

\section{Introduction}\label{intro}

{
We shall be concerned in this paper with the following scalar wave equation (see, e.g., \cite{Uno}):
\begin{equation}\label{eq:wave}
\frac{1}{c^2(x)}\frac{\partial^2 U(x,t)}{\partial t^2}+\sigma(x)\frac{\partial U(x,t)}{\partial t}-\nabla\cdot\left(\frac{1}{\rho(x)}\nabla U(x,t)\right)=-F(x,t)
\end{equation}
for all $x\in\mathbb{R}^N$\ $(N\geq 2)$ and $t\in\mathbb{R}_+$. In equation (\ref{eq:wave}),
$U(x,t)$ is the wave field, $c(x)$, $\sigma(x)$ and $\rho(x)$ are positive scalar functions and represent the wave velocity, the damping coefficient and the mass density of the medium respectively. It is supposed that the medium is compactly supported in a bounded domain $\Omega$ in $\mathbb{R}^N$.
We consider the medium outside $\Omega$ to be homogeneous  and no damping present,
so we may assume after normalization that
$c=\tilde{c}_{0}$, $\rho=1$ and $\sigma=0$
in $\Omega^{c} := \mathbb{R}^N\backslash \overline{\Omega}$. Let $D\Subset\Omega$ be a subregion of $\Omega$ and the material parameters inside $D$ be given by
\begin{equation}\label{eq:parameters}
c(x)=c_{0},\ \ \sigma(x)=\sigma_0,\ \ \rho(x)=\varepsilon^{-1} \quad \mbox{for} ~~x\in D\,,
\end{equation}
where $c_0, \sigma_0$ and $\varepsilon$ are positive constants.
}
This work shall be devoted to the study of the asymptotic development of the wave field $U(x,t)$ as
the mass density $\rho$ inside $D$ tends to infinity, i.e., the parameter $\varepsilon\rightarrow 0^+$.
 {We shall consider  the time-harmonic wave propagation, namely to seek a solution
 of (\ref{eq:wave}) in the following form}
\[
U(x,t)= \Re \{u(x)e^{-i\omega t}\},\quad
F(x,t) = \Re \{ f(x)e^{-i \omega t}\},
\]
where $\omega\in\mathbb{R}_+$ is the frequency. By our earlier assumption on the homogeneous space outside the medium $\Omega$, we see the wave number $k=\omega/\tilde{c}_{0}$. We suppose that $f(x)$ is compactly supported outside the inhomogeneous medium, namely $supp(f)\subset B_{R_0}\backslash\overline{\Omega}$ for some $R_0>0$,
where and in the sequel $B_r$
denotes
{
a ball of radius $r$ centered at the origin in $\mathbb{R}^N$.
Factorizing out the time-dependent part, the wave equation (\ref{eq:wave}) reduces to
the following time-harmonic equation:}
\begin{equation}\label{eq:reduced wave}
\nabla\cdot\left(\frac{1}{\rho}\nabla u\right)+k^2\left(\frac{\tilde{c}_{0}^2}{c^2}+i\frac{\sigma \tilde{c}_{0}}{k}\right)u=f(x)\quad\mbox{in\ \ $\mathbb{R}^N$}.
\end{equation}

We shall seek the total wave field of (\ref{eq:reduced wave}) admitting the following asymptotic development as $|x|\rightarrow\infty$:
\begin{equation}\label{eq:farfield}
u(x)=e^{i k x\cdot d}+\frac{e^{ik|x|}}{|x|^{(N-1)/2}}\left\{\mathcal{A}\left(\hat{x}, d,k\right)+\mathcal{O}\left(\frac{1}{|x|}\right)\right\},
\end{equation}
{
where $e^{i k x\cdot d}$ is the incident field, and
$\mathcal{A}(\hat{x},d,k)$ with $\hat{x}=x/|x|$ is known as the scattering amplitude (cf. \cite{ColKre}
\cite{Isa}), with $d\in \mathbb{S}^{N-1}$. For notational convenience, we set
\[
\gamma=\rho^{-1},\ \ q=\frac{\tilde{c}_{0}^2}{c^{2}}+i\frac{\sigma \tilde{c}_{0}}{k}
\quad \mbox{in} ~~\Omega\backslash\overline{D}; \ \  \eta_0=\frac{\tilde{c}_{0}^2}{c_{0}^{2}},\ \ \tau_0=\frac{\sigma_{0}\tilde{c}_{0}}{k} \quad \mbox{in} ~~D\,,
\]
}
and $u^s(x)=u(x)-u^i(x)$ is the scattered field outside the medium region $\Omega$.

{Throughout the rest of the paper, we assume that $\Omega$ and $D$ are both bounded $C^{2}$ domains such that $\mathbb{R}^N\backslash \overline{\Omega}$ and $\Omega\backslash\overline{D}$ are connected. Let $q\in L^\infty(\Omega\backslash\overline{D})$ and $\gamma(x)\in C^{2}(\overline{\Omega}\backslash D)$ satisfying the following physically meaningful conditions:
{
\[
\gamma_0\leq \gamma(x)\leq \Upsilon_0, \quad \Re q(x) \geq \Gamma_{0}, \quad \Im q(x) \geq 0
\quad ~~\mbox{for} ~~x\in\Omega\backslash\overline{D}\,,
\]
}
where $\gamma_0$, $\Upsilon_0$, $\Gamma_{0}$ are positive constants. With all these preparations,
we can formulate our interested problem of finding the total wave field $u(x)$ of form (\ref{eq:farfield}) to the system (\ref{eq:reduced wave}) as follows:
Find $u_\varepsilon  \in H_{loc}^1(\mathbb{R}^N)$ such that
\begin{equation}\label{eq:transmission problem}
\begin{cases}
\displaystyle{\nabla\cdot(\varepsilon\nabla u_\varepsilon)+ k ^2(\eta_0+i\tau_0)u_\varepsilon=0}\quad & \mbox{in \ $D$},\\
\displaystyle{\nabla\cdot(\gamma(x)\nabla u_\varepsilon)+k^2 q(x) u_\varepsilon=0}\quad & \mbox{in \ $\Omega\backslash\overline{D}$},\\
\displaystyle{ \Delta u_\varepsilon^s +k^2  u_\varepsilon^s =f }\quad & \mbox{in \ $\mathbb{R}^N\backslash\overline{\Omega}$},\\
\displaystyle{u_\varepsilon=u^i+u_\varepsilon^s} \quad   & \mbox{in \ $\mathbb{R}^N\backslash\overline{\Omega}$},\\
\ \displaystyle{u_\varepsilon^-=u_\varepsilon^+,\quad \varepsilon\frac{\partial u_\varepsilon^-}{\partial\nu}=\gamma\frac{\partial u_\varepsilon^+}{\partial\nu}}\quad & \mbox{on \ $\partial D$},\\
\ \displaystyle{u_\varepsilon^- = u_\varepsilon^s+u^i}, \quad  \ \displaystyle{\gamma\frac{\partial u_\varepsilon^-}{\partial\nu}= \frac{\partial u_\varepsilon^s}{\partial\nu}+\frac{\partial u^i}{\partial \nu}}\quad & \mbox{on \ $\partial \Omega$},\\
\ \displaystyle{\lim_{|x|\rightarrow \infty}|x|^{(N-1)/2}\left\{\frac{\partial u_\varepsilon^s}{\partial |x|}-ik u_\varepsilon^s \right\}=0},
\end{cases}
\end{equation}
where $\nu$ denotes the exterior unit normal to $\partial D$ or $\partial \Omega$. We use the notations $u_{\varepsilon}^-$,$u_{\varepsilon}^+$ to represent the limits of $u_\varepsilon$ on $\partial D$ or $\partial\Omega$,
taking respectively from
inside and outside $D$ or $\Omega$.}
The last limit in (\ref{eq:transmission problem}) is known as the Sommerfeld radiation condition. The well-posedness of the scattering problem (\ref{eq:transmission problem}) is given in the Appendix and the scattering amplitude in (\ref{eq:farfield}) can be read off from the large asymptotics of $u_\varepsilon^s$.  It is readily seen that $u_\varepsilon$ depends on $\varepsilon$ nonlinearly and so does $u_\varepsilon^s$. In order to present the main results of this paper, we introduce the following scattering problem:

Find $u\in H_{loc}^1(\mathbb{R}^N\backslash\overline{D})$ such that
\begin{equation}\label{eq:soundhard}
\begin{cases}
\ \displaystyle{\nabla\cdot(\gamma(x)\nabla u)+ k^2 q(x) u=0}\quad & \mbox{in \ $\Omega\backslash\overline{D}$},\\
\ \displaystyle{\Delta u^{s} + k^{2}u^{s} = f}\quad &  \mbox{in \ $\mathbb{R}^N\backslash\overline{\Omega}$}, \\
\ \displaystyle{u=u^i+u^s}  \quad & \mbox{in \  $\mathbb{R}^N\backslash\overline{\Omega}$}, \\
\ \displaystyle{\gamma\frac{\partial u^{+}}{\partial\nu}=0} \quad &  \mbox{on \ $\partial D$},\\
\ \displaystyle{u^{-} = u^{s}+u^{i}, \quad \gamma\frac{\partial u^{-}}{\partial\nu}= \frac{\partial u^{s}}{\partial \nu}+\frac{\partial u^i}{\partial \nu}} \quad & \mbox{on \ $\partial \Omega$},\\
\ \displaystyle{\lim_{|x|\rightarrow \infty}|x|^{(N-1)/2}\left\{\frac{\partial u^{s}}{\partial |x|}-i k u^{s} \right\}}=0.
\end{cases}
\end{equation}

{
One can see from (\ref{eq:soundhard}) that the normal velocity of the wave field
vanishes on the boundary $\partial D$,
so the wave can not penetrate inside $D$. }
In the acoustic scattering, $D$ is known as a {\it sound-hard} obstacle, so the system (\ref{eq:soundhard}) is an obstacle scattering
problem with an obstacle buried inside some inhomogeneous medium.
We shall show that the solution $u_\varepsilon$ of
the medium scattering problem (\ref{eq:transmission problem}) will converge to the solution
$u$ of the obstacle scattering problem
(\ref{eq:soundhard}) as $\varepsilon\rightarrow 0^+$,
or the density $\rho$ of the medium $D$ tends to infinity.
This is reflected by the results in the following three theorems, where $C$ and $\widetilde C$ are
generic constants, which depend only on $q, k, \eta_{0}, \tau_{0}, \gamma, \varepsilon_0, D, \Omega, B_{R}$,
but completely independent of $\varepsilon$.
\begin{thm}\label{thm1}
Let $u_\varepsilon\in H_{loc}^1(\mathbb{R}^N)$ and $u\in H_{loc}^1(\mathbb{R}^N\backslash\overline{D})$ be the solutions to (\ref{eq:transmission problem}) and (\ref{eq:soundhard}), respectively.
Then for any $R>R_0$, there exist $\varepsilon_0>0$ and $C>0$ such that
{the following estimate holds for $\varepsilon<\varepsilon_0$:}
\begin{equation}
\|u_\varepsilon -u\|_{H^{1}(B_{R}\backslash\overline{D})} \leq C\varepsilon^{1/2}(\|u^{i}\|_{H^{1}(B_{R}\backslash \overline{\Omega})} + \|f\|_{L^{2}(B_{R_{0}}\backslash \overline{\Omega})})\,.
\end{equation}
As a consequence,
{
the scattering amplitude $\mathcal{A}_\varepsilon$  of $u_\varepsilon^s$
converges to the amplitude $\mathcal{A}$ of $u^s$
in the following sense that
\begin{equation}\label{farfield}
\|\mathcal{A}_\varepsilon-\mathcal{A}\|_{C(\mathbb{S}^{N-1})}\leq \widetilde{C} \varepsilon^{1/2}(\|u^{i}\|_{H^{1}(B_{R}\backslash \overline{\Omega})} + \|f\|_{L^{2}(B_{R_{0}}\backslash \overline{\Omega})})
\end{equation}
for some constant $\widetilde{C}>0$ and all $\varepsilon<\varepsilon_0$.}
\end{thm}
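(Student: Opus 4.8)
The plan is to work throughout with the difference $w_\varepsilon:=u_\varepsilon-u$ on $B_R\setminus\overline D$ and to show that it is driven by a single small Neumann datum on $\partial D$. Subtracting (\ref{eq:soundhard}) from (\ref{eq:transmission problem}), one checks that $w_\varepsilon$ satisfies the \emph{same} equation as in (\ref{eq:soundhard}) in $\Omega\setminus\overline D$ and in $\mathbb{R}^N\setminus\overline\Omega$ (the source $f$ cancels), the \emph{same} transmission relations on $\partial\Omega$, and the \emph{same} Sommerfeld condition; the only mismatch sits on $\partial D$, where the transmission relation $\varepsilon\partial_\nu u_\varepsilon^-=\gamma\partial_\nu u_\varepsilon^+$ together with $\gamma\partial_\nu u^+=0$ gives $\gamma\,\partial_\nu w_\varepsilon^+=\varepsilon\,\partial_\nu u_\varepsilon^-=:g_\varepsilon$ on $\partial D$. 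Thus $w_\varepsilon$ solves the sound-hard problem (\ref{eq:soundhard}) with vanishing source and inhomogeneous Neumann data $g_\varepsilon$ on $\partial D$. Invoking the bounded-inverse (stability) estimate for that exterior problem (cf. the Appendix), I obtain $\|w_\varepsilon\|_{H^1(B_R\setminus\overline D)}\le C\,\|g_\varepsilon\|_{H^{-1/2}(\partial D)}$, so the first estimate of the theorem reduces to showing $\|g_\varepsilon\|_{H^{-1/2}(\partial D)}\le C\varepsilon^{1/2}(\|u^i\|_{H^1(B_R\setminus\overline\Omega)}+\|f\|_{L^2(B_{R_0}\setminus\overline\Omega)})$.

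Next I would convert $g_\varepsilon=\varepsilon\partial_\nu u_\varepsilon^-$ into interior quantities. For $\phi\in H^{1/2}(\partial D)$ with a bounded extension $\Phi\in H^1(D)$, the interior equation $\nabla\cdot(\varepsilon\nabla u_\varepsilon)=-k^2(\eta_0+i\tau_0)u_\varepsilon$ and Green's formula give $\langle g_\varepsilon,\phi\rangle=\varepsilon\int_D\nabla u_\varepsilon\cdot\nabla\bar\Phi-k^2(\eta_0+i\tau_0)\int_D u_\varepsilon\bar\Phi$, whence
\[
\|g_\varepsilon\|_{H^{-1/2}(\partial D)}\le C\big(\varepsilon\|\nabla u_\varepsilon\|_{L^2(D)}+\|u_\varepsilon\|_{(H^1(D))^*}\big).
\]
As a first ingredient I record the uniform (in $\varepsilon$) bounds $\|u_\varepsilon\|_{L^2(D)}^2+\varepsilon\|\nabla u_\varepsilon\|_{L^2(D)}^2\le C(\|u^i\|+\|f\|)^2$ and $\|u_\varepsilon\|_{H^1(\Omega\setminus\overline D)}+\|u_\varepsilon^+\|_{H^{1/2}(\partial D)}\le C(\|u^i\|+\|f\|)$, coming from the global weak formulation through a G\aa rding inequality, the signs $\Im q\ge0$, $\tau_0>0$, and the radiation condition, plus a compactness argument ensuring the stability constant does not degenerate as $\varepsilon\to0^+$. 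These already yield $\varepsilon\|\nabla u_\varepsilon\|_{L^2(D)}=\varepsilon^{1/2}(\varepsilon\|\nabla u_\varepsilon\|_{L^2(D)}^2)^{1/2}\le C\varepsilon^{1/2}$, but the crude bound $\|u_\varepsilon\|_{(H^1(D))^*}\le\|u_\varepsilon\|_{L^2(D)}$ only delivers $O(\varepsilon^{1/4})$, so a genuine refinement is needed.

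The crux — and the step I expect to be the main obstacle — is a weighted (Agmon-type) energy estimate that simultaneously produces the exponent $1/2$ and breaks the circularity between $g_\varepsilon$ and the interior field. Inside $D$ the equation reads, schematically, $\Delta u_\varepsilon+\varepsilon^{-1}k^2(\eta_0+i\tau_0)u_\varepsilon=0$, a Helmholtz equation with large complex wavenumber whose positive imaginary part forces $u_\varepsilon$ into a boundary layer of width $\sim\sqrt\varepsilon$ with exponential decay into $D$; since $u_\varepsilon^+$ stays $O(1)$, the $L^2$-mass is of size $\sqrt\varepsilon$. To make this rigorous I would introduce the weight $\psi_\varepsilon=c\,\mathrm{dist}(\cdot,\partial D)/\sqrt\varepsilon$ and $W:=\|e^{\psi_\varepsilon}u_\varepsilon\|_{L^2(D)}$, test the interior equation against $e^{2\psi_\varepsilon}\bar u_\varepsilon$, and use $\tau_0>0$ (with $c$ chosen small relative to the decay rate $\mathrm{Im}\sqrt{\eta_0+i\tau_0}>0$) to dominate the weight-gradient terms, obtaining $W^2\le C\big|\int_{\partial D}g_\varepsilon\bar u_\varepsilon^+\big|\le C\,\|g_\varepsilon\|_{H^{-1/2}(\partial D)}$. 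On the other hand, $e^{-\psi_\varepsilon}$ concentrates on the $\sqrt\varepsilon$-layer, so $\|e^{-\psi_\varepsilon}\Phi\|_{L^2(D)}\le C\varepsilon^{1/4}\|\Phi\|_{H^1(D)}$, and by Cauchy--Schwarz $\|u_\varepsilon\|_{(H^1(D))^*}\le C\varepsilon^{1/4}W$. Writing $G:=\|g_\varepsilon\|_{H^{-1/2}(\partial D)}$ and combining these with the reduction above gives $G\le C\varepsilon^{1/2}+C\varepsilon^{1/4}W$ and $W^2\le CG$, i.e. $G\le C\varepsilon^{1/2}+C\varepsilon^{1/4}\sqrt{G}$, which closes to $G\le C\varepsilon^{1/2}(\|u^i\|+\|f\|)$. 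The delicate points are the construction of the weight so that the absorption controls the estimate with constants uniform in $\varepsilon$ (and for every admissible $\tau_0>0$), and verifying that the bootstrap constants are independent of $\varepsilon$; the matching lower bound from the explicit layer profile is what renders the rate sharp.

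Finally, the far-field bound (\ref{farfield}) follows from the $H^1$ estimate by continuity of the far-field map. Fix $\rho$ with $\overline\Omega\cup\mathrm{supp}\,f\subset B_\rho$; in $\mathbb{R}^N\setminus\overline{B_\rho}$ both $u_\varepsilon^s$ and $u^s$ are radiating solutions of the homogeneous Helmholtz equation, hence so is $w_\varepsilon=u_\varepsilon^s-u^s$, and its scattering amplitude is a bounded linear functional of the Cauchy data of $w_\varepsilon$ on $\partial B_\rho$ via the Green/Atkinson--Wilcox representation. Using interior elliptic regularity to pass from $\|w_\varepsilon\|_{H^1}$ on an intermediate shell to the Cauchy data on $\partial B_\rho$, I obtain
\[
\|\mathcal{A}_\varepsilon-\mathcal{A}\|_{C(\mathbb{S}^{N-1})}\le \widetilde C\,\|w_\varepsilon\|_{H^1(B_R\setminus\overline D)}\le \widetilde C\,\varepsilon^{1/2}\big(\|u^i\|_{H^1(B_R\setminus\overline\Omega)}+\|f\|_{L^2(B_{R_0}\setminus\overline\Omega)}\big),
\]
which is exactly (\ref{farfield}).
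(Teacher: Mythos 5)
Your proposal is correct in outline and its skeleton coincides with the paper's: you set $V=u_\varepsilon-u$, observe that $V$ solves the transmission problem \eqref{isotropic1:r} with $f=0$, $g_1=g_2=0$ and the single Neumann datum $p=\gamma\partial u_\varepsilon^+/\partial\nu=\varepsilon\,\partial u_\varepsilon^-/\partial\nu$ on $\partial D$, invoke the stability estimate for that problem (this is precisely Lemma~\ref{important:lemma}, proved in the paper by a DtN-truncated variational argument; it is not the Appendix, which treats the full transmission problem \eqref{eq:transmission problem}), and obtain \eqref{farfield} from the Green representation of $\mathcal{A}_\varepsilon-\mathcal{A}$ on $\partial B_R$, exactly as the paper does. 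Where you genuinely diverge is in proving $\|\varepsilon\,\partial u_\varepsilon^-/\partial\nu\|_{H^{-1/2}(\partial D)}=O(\varepsilon^{1/2})$. The paper gets this in one line (Theorem~\ref{thm2}) by combining the a priori bound $\sqrt{\varepsilon}\,\|u_\varepsilon\|_{H^1(D)}\le C$ of Lemma~\ref{mainlem} with the normal-derivative trace inequality $\|\partial u_\varepsilon^-/\partial\nu\|_{H^{-1/2}(\partial D)}\le\widetilde C\|u_\varepsilon\|_{H^1(D)}$, with $\widetilde C$ depending only on $D$. You instead represent the conormal datum by duality through the interior weak form, correctly note that the resulting term $\|u_\varepsilon\|_{(H^1(D))^*}$ is not $O(\varepsilon^{1/2})$ from the energy identities alone, and close the gap with a weighted Agmon estimate (weight $e^{c\,\mathrm{dist}(\cdot,\partial D)/\sqrt{\varepsilon}}$) and the bootstrap $G\le C\varepsilon^{1/2}+C\varepsilon^{1/4}\sqrt{G}$. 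This is heavier than the paper's argument, but it is self-contained and addresses a real subtlety: the $\varepsilon$-uniformity of the trace constant $\widetilde C$ for the normal derivative of a solution of a Helmholtz equation with effective wavenumber $\sim\varepsilon^{-1/2}$ is exactly the point one must check in the paper's shortcut, and your duality-plus-weight route does not rely on it (the same refinement would also be needed, and works, inside the compactness argument behind Lemma~\ref{mainlem}, where the paper uses the trace inequality a second time). The price is the extra weighted-identity analysis; the gain is an argument whose constants are transparently independent of $\varepsilon$ and which yields the interior exponential decay of Theorem~\ref{thm3} as a by-product.
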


The next theorem characterizes the normal velocity of the wave field $u_\varepsilon$ on
the boundary of the medium $D$.

\begin{thm}\label{thm2}
For the solution $u_\varepsilon\in H_{loc}^1(\mathbb{R}^N)$ to the system
(\ref{eq:transmission problem}),
there exists $\varepsilon_0>0$ such that
{
the following estimate holds
for $\varepsilon<\varepsilon_0$:}
\begin{equation}
\left\|\gamma\frac{\partial u_\varepsilon^+}{\partial\nu}\right\|_{H^{-1/2}(\partial D)}\leq C \varepsilon^{1/2}(\|u^{i}\|_{H^{1}(B_{R}\backslash \overline{\Omega})} + \|f\|_{L^{2}(B_{R_{0}}\backslash \overline{\Omega})})\,.
\label{eq:trace1}
\end{equation}
\end{thm}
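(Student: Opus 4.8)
The plan is to deduce this estimate directly from Theorem~\ref{thm1} by exploiting the sound-hard boundary condition satisfied by the limiting field $u$. The crucial observation is that in the obstacle problem (\ref{eq:soundhard}) one has $\gamma\,\partial u^+/\partial\nu=0$ on $\partial D$, so the quantity we must control is precisely the conormal derivative of the difference $w_\varepsilon:=u_\varepsilon-u$ rather than of $u_\varepsilon$ itself. Indeed, comparing the second equation of (\ref{eq:transmission problem}) with the first equation of (\ref{eq:soundhard}), both $u_\varepsilon$ and $u$ satisfy $\nabla\cdot(\gamma\nabla\,\cdot\,)+k^2 q\,(\cdot)=0$ in $\Omega\backslash\overline{D}$, whence $w_\varepsilon$ solves the same homogeneous equation there. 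Using $\gamma\,\partial u^+/\partial\nu=0$, we obtain $\gamma\,\partial u_\varepsilon^+/\partial\nu=\gamma\,\partial w_\varepsilon^+/\partial\nu$ on $\partial D$, so it suffices to bound $\|\gamma\,\partial w_\varepsilon^+/\partial\nu\|_{H^{-1/2}(\partial D)}$.

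First I would establish the standard trace estimate for the conormal derivative. Since $w_\varepsilon\in H^1(\Omega\backslash\overline{D})$ and $\nabla\cdot(\gamma\nabla w_\varepsilon)=-k^2 q\,w_\varepsilon\in L^2(\Omega\backslash\overline{D})$, the conormal derivative $\gamma\,\partial w_\varepsilon/\partial\nu$ is well defined as an element of $H^{-1/2}(\partial(\Omega\backslash\overline{D}))$ through Green's formula: for every test function $\phi\in H^1(\Omega\backslash\overline{D})$,
\[
\Big\langle \gamma\frac{\partial w_\varepsilon}{\partial\nu},\phi\Big\rangle_{\partial(\Omega\backslash\overline{D})} = \int_{\Omega\backslash\overline{D}}\Big(\gamma\nabla w_\varepsilon\cdot\nabla\phi+\nabla\cdot(\gamma\nabla w_\varepsilon)\,\phi\Big)\,dx .
\]
To isolate the piece on $\partial D$, given any $g\in H^{1/2}(\partial D)$ I would pick an extension $\phi\in H^1(\Omega\backslash\overline{D})$ with $\phi=g$ on $\partial D$, $\phi=0$ on $\partial\Omega$, and $\|\phi\|_{H^1(\Omega\backslash\overline{D})}\le C\|g\|_{H^{1/2}(\partial D)}$. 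Plugging this $\phi$ into the identity, estimating the right-hand side by Cauchy--Schwarz, and taking the supremum over $g$ gives
\[
\Big\|\gamma\frac{\partial w_\varepsilon^+}{\partial\nu}\Big\|_{H^{-1/2}(\partial D)} \le C\Big(\|w_\varepsilon\|_{H^1(\Omega\backslash\overline{D})}+\|\nabla\cdot(\gamma\nabla w_\varepsilon)\|_{L^2(\Omega\backslash\overline{D})}\Big).
\]

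It then remains to absorb the lower-order term. Because $\nabla\cdot(\gamma\nabla w_\varepsilon)=-k^2 q\,w_\varepsilon$ and $q\in L^\infty(\Omega\backslash\overline{D})$, we have $\|\nabla\cdot(\gamma\nabla w_\varepsilon)\|_{L^2}\le k^2\|q\|_{L^\infty}\|w_\varepsilon\|_{L^2}\le C\|w_\varepsilon\|_{H^1(\Omega\backslash\overline{D})}$, so that, since $\Omega\subset B_R$,
\[
\Big\|\gamma\frac{\partial u_\varepsilon^+}{\partial\nu}\Big\|_{H^{-1/2}(\partial D)} =\Big\|\gamma\frac{\partial w_\varepsilon^+}{\partial\nu}\Big\|_{H^{-1/2}(\partial D)}\le C\|u_\varepsilon-u\|_{H^1(B_R\backslash\overline{D})}.
\]
Invoking the bound of Theorem~\ref{thm1} on the right-hand side yields the desired $\varepsilon^{1/2}$ rate. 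I do not expect a genuine obstacle here: the argument is essentially a trace theorem layered on top of Theorem~\ref{thm1}. The only points requiring care are the correct weak/variational meaning of the one-sided conormal derivative $\gamma\,\partial w_\varepsilon^+/\partial\nu$ on the interior interface $\partial D$ (with the sign convention for $\nu$ as fixed in (\ref{eq:transmission problem})) and the construction of the localized extension $\phi$ vanishing on $\partial\Omega$, both of which are standard for the $C^2$ domains assumed here.
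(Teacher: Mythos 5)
Your argument is internally consistent as far as it goes, but it has a structural flaw in the context of this paper: it is circular. You deduce Theorem~\ref{thm2} from Theorem~\ref{thm1}, yet the paper's proof of Theorem~\ref{thm1} is itself obtained \emph{from} Theorem~\ref{thm2}: one sets $V=u_\varepsilon-u$, observes that $V$ solves the transmission problem (\ref{isotropic1:r}) with data $p=\gamma\,\partial u_\varepsilon^+/\partial\nu$ on $\partial D$ and $g_1=g_2=f=0$, and then invokes the stability estimate of Lemma~\ref{important:lemma} together with the bound (\ref{eq:trace1}) on $p$ to get the $\varepsilon^{1/2}$ rate for $\|u_\varepsilon-u\|_{H^1(B_R\backslash\overline{D})}$. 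So the quantity you want to control is precisely the input needed to prove the theorem you are invoking. Unless you supply an independent proof of Theorem~\ref{thm1} (which your proposal does not), the argument does not close. The individual steps you describe --- the identity $\gamma\,\partial u_\varepsilon^+/\partial\nu=\gamma\,\partial(u_\varepsilon-u)^+/\partial\nu$ via the sound-hard condition, and the variational trace bound $\|\gamma\,\partial w_\varepsilon^+/\partial\nu\|_{H^{-1/2}(\partial D)}\le C\|w_\varepsilon\|_{H^1(\Omega\backslash\overline{D})}$ --- are all fine; the problem is purely one of logical order.

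The paper's route is genuinely different and avoids this: it works from \emph{inside} $D$ rather than from $\Omega\backslash\overline{D}$. Lemma~\ref{mainlem} gives the a priori bound $\sqrt{\varepsilon}\,\|u_\varepsilon\|_{H^1(D)}\le C(\|f\|_{L^2}+\|u^i\|_{H^1})$, proved by energy identities plus a compactness/contradiction argument that relies only on the well-posedness Lemma~\ref{important:lemma}, not on Theorem~\ref{thm1}. Taking the trace from inside gives $\|\partial u_\varepsilon^-/\partial\nu\|_{H^{-1/2}(\partial D)}\le C\|u_\varepsilon\|_{H^1(D)}\le C\varepsilon^{-1/2}(\cdots)$, and the transmission condition $\gamma\,\partial u_\varepsilon^+/\partial\nu=\varepsilon\,\partial u_\varepsilon^-/\partial\nu$ then converts the interior $\varepsilon^{-1/2}$ blow-up into the desired $\varepsilon^{1/2}$ decay. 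If you want to salvage your outline, you would need to replace the appeal to Theorem~\ref{thm1} with this interior estimate (or some other $\varepsilon$-uniform a priori bound); as written, the proposal has a genuine gap.
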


Moreover, the next lemma indicates that the solution $u_\varepsilon$ inside the medium $D$ decays exponentially.

\begin{thm}\label{thm3}
Let $D_0$ be a subdomain such that $D_{0} \Subset D$
with $\mbox{\em dist}(\partial D_{0}, \partial D)$ $\geq \delta_0>0$,
and $\sqrt{\eta_{0}+i\tau_{0}} = a + bi$ with $a >0, b>0$.
Then for the solution $u_\varepsilon\in H_{loc}^1(\mathbb{R}^N)$
to the system (\ref{eq:transmission problem}),
there exists $\varepsilon_0>0$ such that for $\varepsilon<\varepsilon_0$,
\begin{equation}\label{eq:exponential}
\|u_{\varepsilon}\|_{C(D_{0})} \leq  C \exp({-\frac{kb\delta_0}{2 \sqrt{ \varepsilon}}})\, (\|u^{i}\|_{H^{1}(B_{R}\backslash \overline{\Omega})} + \|f\|_{L^{2}(B_{R_{0}}\backslash \overline{\Omega})})\,.
\end{equation}
\end{thm}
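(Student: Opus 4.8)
The plan is to exploit the fact that inside $D$ the constant-coefficient equation $\nabla\cdot(\varepsilon\nabla u_\varepsilon)+k^2(\eta_0+i\tau_0)u_\varepsilon=0$ is simply a Helmholtz equation $\Delta u_\varepsilon+\mu^2 u_\varepsilon=0$ with the large complex wavenumber $\mu=k(a+bi)/\sqrt\varepsilon$, so that $\Im\mu=kb/\sqrt\varepsilon\to+\infty$. Writing $h=\sqrt\varepsilon$, the operator $h^2\Delta+k^2(\eta_0+i\tau_0)$ has semiclassical symbol $-|\xi|^2+k^2\eta_0+ik^2\tau_0$, which never vanishes for real $\xi$ because $\tau_0=2ab>0$. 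This semiclassical ellipticity is precisely the mechanism forcing $u_\varepsilon$ to decay exponentially in the interior of $D$, and the whole proof is a quantitative realization of it.

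First I would record a uniform-in-$\varepsilon$ global bound. Testing the interior equation against $\overline{u_\varepsilon}$ over $D$, integrating by parts and taking the imaginary part gives $k^2\tau_0\|u_\varepsilon\|_{L^2(D)}^2=-\Im\int_{\partial D}\gamma\,\partial_\nu u_\varepsilon^+\,\overline{u_\varepsilon}\,ds$, where I have used the transmission condition $\varepsilon\partial_\nu u_\varepsilon^-=\gamma\partial_\nu u_\varepsilon^+$. The exterior $H^1$ bound from the well-posedness (Appendix) and Theorem \ref{thm1}, together with the flux estimate of Theorem \ref{thm2}, bound the right-hand side by $C(\|u^i\|_{H^1}+\|f\|_{L^2})^2$, so that $\|u_\varepsilon\|_{L^2(D)}\le C(\|u^i\|_{H^1}+\|f\|_{L^2})$ uniformly in $\varepsilon$; a Caccioppoli inequality then controls $\sqrt\varepsilon\,\|\nabla u_\varepsilon\|_{L^2}$ on interior collars by the same quantity.

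The heart of the matter is a weighted (Agmon-type) energy estimate. I would choose a weight $\phi\in C^2(\overline D)$ that vanishes in a thin collar of $\partial D$, has slope $|\nabla\phi|\le\alpha$ for a fixed $\alpha<kb$, and satisfies $\phi\ge \tfrac{kb\delta_0}{2}$ throughout a set $D_0'$ with $D_0\Subset D_0'$; this is possible since $\tfrac{kb\delta_0}{2}<kb\,\delta_0$ leaves ample room below the critical slope $kb$. Conjugating, $w:=e^{\phi/h}u_\varepsilon$ solves $h^2\Delta w-2h\nabla\phi\cdot\nabla w+(|\nabla\phi|^2+k^2\eta_0-h\Delta\phi+ik^2\tau_0)w=0$. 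The decisive algebraic fact is that the real and imaginary parts of the conjugated symbol, $-|\xi|^2+|\nabla\phi|^2+k^2\eta_0$ and $-2\xi\cdot\nabla\phi+k^2\tau_0$, cannot vanish simultaneously for real $\xi$ exactly when $|\nabla\phi|^2<k^2b^2$; here the relations $\eta_0=a^2-b^2$ and $\eta_0^2+\tau_0^2=(a^2+b^2)^2$ pin down the threshold. Since moreover the real part tends to $-|\xi|^2$, the conjugated operator is semiclassically elliptic (its symbol is comparable to $\langle\xi\rangle^2$) throughout $D$, and the associated elliptic a priori estimate applied to $\chi w$, with $\chi$ a cutoff equal to $1$ on $D_0'$ whose gradient is supported in the collar where $\phi\equiv 0$, yields $\|w\|_{L^2(D_0')}\le C\,h\,\|u_\varepsilon\|_{L^2(\text{collar})}$. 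Crucially the commutator terms live where $\phi\equiv 0$, so $w=u_\varepsilon$ there and no exponential factor is lost; they are absorbed by the uniform bound of the previous step. Reading off $e^{\phi/h}\ge e^{kb\delta_0/(2h)}$ on $D_0'$ then gives $\|u_\varepsilon\|_{L^2(D_0')}\le C\,e^{-kb\delta_0/(2\sqrt\varepsilon)}(\|u^i\|_{H^1}+\|f\|_{L^2})$, and taking $\alpha$ closer to $kb$ one obtains in fact any exponent below $kb\delta_0/\sqrt\varepsilon$.

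Finally I would upgrade the $L^2$ decay to the pointwise $C(D_0)$ bound. After the rescaling $x=x_0+\sqrt\varepsilon\,y$ the equation becomes $\Delta_y\tilde u+k^2(\eta_0+i\tau_0)\tilde u=0$ with $O(1)$ coefficients, so interior elliptic estimates and Sobolev embedding give $\|u_\varepsilon\|_{C(D_0)}\le C\varepsilon^{-m}\|u_\varepsilon\|_{L^2(D_0')}$ for a fixed $m$; since $\varepsilon^{-m}e^{-A/\sqrt\varepsilon}\le e^{-A'/\sqrt\varepsilon}$ for any $A'<A$ and $\varepsilon$ small, the polynomial loss is absorbed, and choosing the $L^2$ exponent slightly above $kb\delta_0/(2\sqrt\varepsilon)$ in the previous step yields exactly the claimed rate. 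The main obstacle is the weighted estimate: verifying the sub-characteristic threshold $|\nabla\phi|<kb$ and the attendant uniform ellipticity of the large complex operator, and arranging the localization so that the commutator terms sit where $\phi\equiv 0$ and are genuinely controlled by the uniform bound without eroding the exponential gain. The energy identity and the $L^2$-to-$C$ passage are comparatively routine.
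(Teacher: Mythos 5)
Your argument is correct in outline but follows a genuinely different route from the paper. The paper's proof is much more elementary: it writes $u_\varepsilon$ inside $D$ via the Green representation formula over $\partial D$ with the \emph{explicit} fundamental solution $G(x,y)$ of $\Delta+\tilde k^2$, $\tilde k=k(a+ib)\varepsilon^{-1/2}$, observes that for $x\in D_0$, $y\in\partial D$ one has $|G(x,y)|+|\nabla_yG(x,y)|\le C\varepsilon^{-1/2}e^{-kb\delta_0/\sqrt{\varepsilon}}$, and pairs this with the trace bounds $\|u_\varepsilon^-\|_{H^{1/2}(\partial D)}+\|\partial_\nu u_\varepsilon^-\|_{H^{-1/2}(\partial D)}\le C\varepsilon^{-1/2}(\cdots)$ coming from Lemma \ref{mainlem}; the resulting polynomial loss $\varepsilon^{-1}$ is absorbed by halving the exponent, exactly as in your final step, and the pointwise bound comes for free from the representation formula. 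Your Agmon-type conjugation $w=e^{\phi/\sqrt{\varepsilon}}u_\varepsilon$ with subcritical weight $|\nabla\phi|<kb$ is a legitimate alternative, and your computation of the threshold (the positive root $t^2=k^2b^2$ of $t^4+k^2(a^2-b^2)t^2-k^4a^2b^2$) is right; it is consistent with the decay rate of the paper's fundamental solution and would generalize to variable coefficients inside $D$, which the explicit-kernel proof would not. The price is that the one genuinely nontrivial ingredient — the a priori estimate $\|\chi w\|\le C\|P_h(\chi w)\|$ for the conjugated operator, whose ellipticity rests on the \emph{joint} nonvanishing of the real and imaginary parts of a non-self-adjoint symbol rather than on a sign of a single quadratic form — is asserted rather than proved; a straightforward energy identity does not deliver it, and you would need either a semiclassical parametrix or a carefully weighted combination of the real and imaginary parts of $\int P_h(\chi w)\overline{\chi w}$. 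Everything else (the uniform $L^2(D)$ bound, the placement of the commutator in the collar where $\phi\equiv0$ so that it is controlled by Lemma \ref{mainlem}, and the rescaled interior estimate upgrading $L^2$ to $C(D_0)$ with an absorbable $\varepsilon^{-N/4}$ loss) is sound.
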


\section{Discussions}

We are interested in the scattering from a compactly supported inhomogeneous isotropic medium,
with a subregion occupied by some medium possessing a large density.
Based on our discussions in the previous section, we let
\begin{equation}\label{eq:medium}
\{\Omega\backslash\overline{D};\gamma,q\}\oplus\{D; \varepsilon, \eta_0+i\tau_0\}
\end{equation}
denote the inhomogeneity supported in $\Omega$ in (\ref{eq:transmission problem}),
and
\begin{equation}\label{eq:complex scatterer}
\{\Omega\backslash\overline{D};\gamma,q\}\oplus D
\end{equation}
denote the scatterer in (\ref{eq:soundhard}), where $D$ is known as an impenetrable
{\it sound-hard obstacle} in the acoustic scattering (cf.\,\cite{ColKre}).
As it can be seen from (\ref{eq:soundhard}),
the wave field for a sound-hard obstacle can not penetrate inside and
the normal wave velocity vanishes on the exterior boundary of the obstacle. We call the scatterer in (\ref{eq:complex scatterer}), composed of an obstacle and a surrounding inhomogeneous medium as a {\it complex scatterer}. In this work, we actually show that
\begin{equation}\label{eq:convergence}
\{\Omega\backslash\overline{D};\gamma,q\}\oplus\{D; \varepsilon, \eta_0+i\tau_0\}\rightarrow \{\Omega\backslash\overline{D};\gamma,q\}\oplus D\quad\mbox{as\ \ $\varepsilon\rightarrow 0^+$},
\end{equation}
in the sense of Theorems~\ref{thm1}--\ref{thm3}.
That is, a sound-hard obstacle can be treated as a medium with extreme material property, namely
with a very large mass density.
{
Despite
the nonlinear nature of the convergence (\ref{eq:convergence}), we can still }
derive very accurate estimates
in a general setting. In addition to provide a mathematical characterization of a physically sound-hard obstacle and its asymptotic connection to media with extreme material properties, we would like to note that the results established in this work could have some interesting implication in the inverse scattering problem of reconstructing a complex scatterer. In fact, it can be seen that a complex scatterer could be reconstructed as a medium, and one could locate the embedded obstacle in the reconstruction as the subregion with a large density parameter.

Finally, we make another {practically meaningful} remark on our study.
In (\ref{eq:medium}), the outer inhomogeneous medium $\{\Omega\backslash\overline{D}; \gamma, q\}$
could be anisotropic, for which one could also show the convergence (\ref{eq:convergence})
by modifying our arguments in the subsequent sections.
However, as mentioned earlier, one of our main motivations is from the inverse scattering problem.
If the surrounding medium is anisotropic, one could not uniquely recover a complex scatterer;
actually one may have the invisibility or virtual reshaping phenomena (see, e.g. \cite{Liu} \cite{U2}).
This is why we focus on the isotropic setting in this work.
The extreme medium inside $D$ is assumed to be lossy, which is
a realistic assumption from the practical viewpoint.

The rest of the paper is organized as follows.
{
In Section~\ref{section:4},
we prove the main results of this work, and
demonstrate the sharpness of
our major theoretical estimates
by considering a special case based on series expansions
in Section~\ref{ballcase:check}.}

\section{Proofs of the main theorems}\label{section:4}
This section is devoted to the proofs of Theorems~\ref{thm1}--\ref{thm3}
in Section~\ref{intro}. For the purpose we need the following lemma.
\begin{lem}\label{important:lemma}
Consider the following transmission problem
\begin{equation}\label{isotropic1:r}
\begin{cases}
\ \displaystyle{\nabla \cdot(\gamma(x)\nabla v)+ k^2 q(x) v = 0}  & \mbox{in\ ~~$\Omega \backslash \overline{D}$} , \\
\ \displaystyle{\Delta u^{s} + k^2 u^{s} = f}  & \mbox{in\ ~~$\mathbb{R}^N\backslash \overline{\Omega}$}, \\
\displaystyle{\gamma \frac{\partial v}{\partial \nu} = p\in H^{-1/2}(\partial D)} & \mbox{on\ ~~$\partial D$}, \\
v-u^{s}=g_{1}\in H^{1/2}(\partial \Omega) & \mbox{on\ ~~$\partial\Omega$}, \\
\displaystyle{\gamma\frac{\partial v}{\partial \nu} - \frac{\partial u^{s}}{\partial \nu}=g_{2} \in H^{-1/2}(\partial \Omega)} & \mbox{on\ ~~$\partial\Omega$}, \\
\displaystyle{\lim_{|x|\rightarrow +\infty}|x|^{(N-1)/2}\left\{\frac{\partial u^{s}}{\partial |x|}-ik u^{s}\right\}=0.}
\end{cases}
\end{equation}
There exists a unique solution $(v, u^s)\in H^{1}(\Omega\backslash\overline{D})\times H_{loc}^1(\mathbb{R}^N\backslash\overline{\Omega})$ to (\ref{isotropic1:r}), and the solution satisfies
\begin{equation}\label{eq:est1}
\begin{split}
&\|v\|_{H^{1}(\Omega \backslash \overline{D})}+\|u^{s}\|_{H^{1}(B_{R}\backslash \overline{\Omega})}\\
\leq & C(\|p\|_{H^{-1/2}(\partial D)} + \|g_{1}\|_{H^{1/2}(\partial \Omega)} + \|g_{2}\|_{H^{-1/2}(\partial \Omega)} + \|f\|_{L^{2}(B_{R_0}\backslash \overline{\Omega})}),
\end{split}
\end{equation}
where the positive constant $C$ depends only on $\gamma, q, k, \Omega$, $D$ and $B_R$, but independent of $p$, $g_{1}$, $g_{2}$, $f$.
\end{lem}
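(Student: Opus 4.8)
The plan is to recast \eqref{isotropic1:r} as a bounded-domain variational problem and then invoke the Fredholm alternative, deducing existence and the estimate \eqref{eq:est1} from uniqueness. First I would truncate the unbounded exterior: fix a ball $B_R$ with $\overline{\Omega}\subset B_R$ and $R>R_0$, and replace the Sommerfeld condition by the exterior Dirichlet-to-Neumann operator $\Lambda\colon H^{1/2}(\partial B_R)\to H^{-1/2}(\partial B_R)$, so that $\partial u^s/\partial\nu=\Lambda u^s$ on $\partial B_R$ encodes the radiating behaviour. Testing the first equation against $\overline{\phi}\in H^1(\Omega\backslash\overline{D})$ and the second against $\overline{\psi}\in H^1(B_R\backslash\overline{\Omega})$, integrating by parts, and inserting the data $p$ on $\partial D$, $g_2$ on $\partial\Omega$ and the DtN relation on $\partial B_R$, I obtain a sesquilinear form $B[(v,u^s),(\phi,\psi)]$ on the product space $H^1(\Omega\backslash\overline{D})\times H^1(B_R\backslash\overline{\Omega})$. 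The Dirichlet transmission condition $v-u^s=g_1$ is imposed as a constraint on the trial space and removed by a lifting: choosing an extension of $g_1$ supported near $\partial\Omega$ reduces the problem to test and trial functions matching across $\partial\Omega$, and moves $g_1$ into the load functional together with $p$, $g_2$ and $f$.

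Next I would verify that $B$ satisfies a G\aa rding inequality. The principal part $\int_{\Omega\backslash\overline{D}}\gamma|\nabla v|^2+\int_{B_R\backslash\overline{\Omega}}|\nabla u^s|^2$ is coercive on $H^1$ modulo $L^2$ because $\gamma\geq\gamma_0>0$; the zeroth-order terms $-k^2\int q|v|^2$ and $-k^2\int|u^s|^2$ are compact perturbations via the Rellich embedding $H^1\hookrightarrow\hookrightarrow L^2$; and $\Lambda$ splits as a coercive-signed part plus a smoothing, hence compact, remainder on $\partial B_R$. Consequently the bounded operator associated with $B$ is Fredholm of index zero, so existence and the bound follow once injectivity is established.

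For uniqueness set $p=g_1=g_2=f=0$. Testing the interior equation with $\overline{v}$ gives, using $\gamma\partial_\nu v=0$ on $\partial D$, the identity $\int_{\partial\Omega}\gamma\partial_\nu v\,\overline{v}=\int_{\Omega\backslash\overline{D}}\gamma|\nabla v|^2-k^2\int_{\Omega\backslash\overline{D}} q|v|^2$; Green's identity for the radiating $u^s$ over $\mathbb{R}^N\backslash\overline{\Omega}$ supplies the companion relation, and the transmission conditions $v=u^s$, $\gamma\partial_\nu v=\partial_\nu u^s$ on $\partial\Omega$ let me equate the two boundary integrals. Taking imaginary parts and letting $R\to\infty$ collapses everything to $k^2\int_{\Omega\backslash\overline{D}}(\Im q)|v|^2 + k\int_{\mathbb{S}^{N-1}}|u^\infty|^2=0$, where $u^\infty$ is the far-field pattern of $u^s$. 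Since $\Im q\geq0$ both terms vanish; hence $u^\infty\equiv0$ and Rellich's lemma forces $u^s\equiv0$ in $\mathbb{R}^N\backslash\overline{\Omega}$. The transmission relations then yield $v=\partial_\nu v=0$ on $\partial\Omega$, i.e. vanishing Cauchy data, and since $\gamma\in C^2$ and $\Omega\backslash\overline{D}$ is connected, unique continuation for $\nabla\cdot(\gamma\nabla v)+k^2 q v=0$ gives $v\equiv0$.

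With injectivity in hand, the Fredholm alternative provides a bounded inverse, so the solution is controlled by the load functional; unwinding the liftings bounds its norm by $\|p\|_{H^{-1/2}(\partial D)}+\|g_1\|_{H^{1/2}(\partial\Omega)}+\|g_2\|_{H^{-1/2}(\partial\Omega)}+\|f\|_{L^2(B_{R_0}\backslash\overline{\Omega})}$, which is \eqref{eq:est1}, with $C$ depending only on $\gamma,q,k,\Omega,D,B_R$. The main obstacle I anticipate is not any single estimate but the bookkeeping of the coupling: correctly orienting the normals across $\partial D$ and $\partial\Omega$, decomposing $\Lambda$ so that the G\aa rding structure survives the DtN term, and, in the uniqueness step, supplying the unique-continuation argument in the borderline regime where $\Im q$ may vanish on part of $\Omega\backslash\overline{D}$, so that positivity of $\Im q$ alone does not kill $v$ and one genuinely must propagate from the Cauchy data on $\partial\Omega$.
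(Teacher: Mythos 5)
Your proposal is correct and follows essentially the same route as the paper: truncation via the exterior Dirichlet-to-Neumann map, a lifting of the Dirichlet jump $g_1$ into the load functional, a coercive-plus-compact (G\aa rding/Fredholm index zero) splitting of the sesquilinear form, and uniqueness via imaginary parts, Rellich's lemma, and unique continuation from the vanishing Cauchy data on $\partial\Omega$. The only cosmetic difference is that the paper works with a single glued unknown $w\in H^1(B_R)$ and an auxiliary harmonic-type DtN operator $\Lambda_0$ to realize the sign-definite part of $\Lambda$, whereas you keep the two fields on a product space; both are equivalent.
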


We could not find some references on the well-posedness of the transmission problem (\ref{isotropic1:r}),
so provide a proof by using a variational technique presented in \cite{FD} and \cite{PH}.
We first demonstrate the following auxiliary lemma.
\begin{lem}\label{lemma:assit}
The system \eqref{isotropic1:r} is uniquely solvable and
it is equivalent to the following truncated system:
find $(v_1,u_1)\in H^1(\Omega\backslash\overline{D})
\times H^1(B_R\backslash\overline{\Omega})$ such that
\begin{equation}\label{dtn:bound}
\begin{cases}
\ \displaystyle{\nabla \cdot (\gamma(x) \nabla v_{1})+ k^2 q(x) v_{1} = 0} \quad & \mbox{in\ ~~$\Omega \backslash \overline{D}$}, \\
\ \displaystyle{\Delta u_{1} + k^2 u_{1} = f} \quad & \mbox{in\ ~~$B_{R}\backslash \overline{\Omega}$}, \\
\gamma\frac{\partial v}{\partial \nu} = p \quad & \mbox{on ~~$\partial D$}, \\
v_{1} -  u_{1}  = g_{1} \quad & \mbox{on ~~$\partial \Omega$}, \\
\gamma \frac{\partial v_{1}}{\partial \nu} - \frac{\partial u_{1}}{\partial \nu} = g_{2} \quad & \mbox{on\ ~~$\partial \Omega$}, \\
\frac{\partial u_{1}}{\partial \nu} = \Lambda u_{1} \quad & \mbox{on\ ~~$\partial B_{R}$},
\end{cases}
\end{equation}
{
where $\Lambda: H^{1/2}(\partial B_{R})\rightarrow H^{-1/2}(\partial B_{R})$
is the Dirichlet-to-Neumann map
defined by
$\Lambda \psi = \frac{\partial W}{\partial \nu}|_{\partial B_{R}}$
(cf. \cite{FD} \cite{Kir} \cite{PH}),
with $W\in H^{1}_{loc}(\mathbb{R}^N\backslash \overline{B}_{R})$ being the unique solution
to the system
}
\begin{equation}
\begin{cases}
\Delta W + k^2 W = 0 \quad & \mbox{in \ $\mathbb{R}^N \backslash \overline{B}_{R}$}, \\
W  = \psi \in H^{1/2}(\partial B_{R}) \quad & \mbox{on \ $\partial B_R$}, \\
\displaystyle{\lim_{|x|\rightarrow +\infty}|x|^{(N-1)/2}\left\{\frac{\partial W}{\partial |x|}-ik W\right\}=0.}
\end{cases}
\end{equation}
\end{lem}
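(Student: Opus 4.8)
The plan is to establish the claimed equivalence first and then deduce unique solvability from the truncated bounded-domain formulation. The device is the exterior Dirichlet-to-Neumann operator $\Lambda$: by definition $\partial_\nu W|_{\partial B_R}=\Lambda\psi$ is the Neumann trace of the unique radiating solution $W$ of the exterior Helmholtz problem with $W|_{\partial B_R}=\psi$. Since $f$ is supported in $B_{R_0}\setminus\overline{\Omega}$ and $R>R_0$, any solution of \eqref{isotropic1:r} satisfies $\Delta u^s+k^2u^s=0$ together with the Sommerfeld condition in $\mathbb{R}^N\setminus\overline{B_R}$, so $u^s$ is exactly such a radiating field there. For the forward direction I would set $v_1=v$ and $u_1=u^s|_{B_R\setminus\overline{\Omega}}$; the preceding remark shows that $u^s$ coincides in $\mathbb{R}^N\setminus\overline{B_R}$ with the radiating extension of its own Dirichlet trace, whence $\partial_\nu u_1=\Lambda u_1$ on $\partial B_R$, and all remaining interior equations and interface conditions are inherited verbatim. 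For the converse, given $(v_1,u_1)$ solving \eqref{dtn:bound}, I would solve the $W$-problem with datum $\psi=u_1|_{\partial B_R}$ and glue: set $u^s=u_1$ in $B_R\setminus\overline{\Omega}$ and $u^s=W$ in $\mathbb{R}^N\setminus\overline{B_R}$. The Dirichlet traces agree by construction and the Neumann traces agree precisely because $\partial_\nu u_1=\Lambda u_1$, so a standard weak-formulation argument shows that the glued $u^s$ is $H^1_{loc}$, solves the Helmholtz equation across $\partial B_R$, and meets the radiation condition; this produces a solution of \eqref{isotropic1:r} and a one-to-one correspondence between the two systems.

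\emph{Uniqueness} is the part I expect to be the crux. I would take zero data ($f=0$, $p=0$, $g_1=g_2=0$) and integrate the equations by parts over $\Omega\setminus\overline{D}$ and $B_R\setminus\overline{\Omega}$ to obtain
\begin{equation*}
\int_{\Omega\setminus\overline{D}}\!\big(\gamma|\nabla v_1|^2-k^2q|v_1|^2\big)+\int_{B_R\setminus\overline{\Omega}}\!\big(|\nabla u_1|^2-k^2|u_1|^2\big)=\langle\Lambda u_1,u_1\rangle_{\partial B_R},
\end{equation*}
where the contributions on $\partial D$ and $\partial\Omega$ cancel by the homogeneous Neumann condition $\gamma\partial_\nu v_1=0$ and the homogeneous transmission conditions. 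Because $\gamma$ is real and $\Im q\ge0$, the imaginary part of the left-hand side equals $-k^2\int\Im q\,|v_1|^2\le0$, while $\Im\langle\Lambda u_1,u_1\rangle\ge0$ and represents, up to a positive factor, the squared $L^2$-norm of the far-field pattern of the radiating extension. Hence both sides have vanishing imaginary part, the far-field pattern is zero, and Rellich's lemma forces $u_1\equiv0$ near $\partial B_R$; unique continuation for the Helmholtz equation then propagates $u_1\equiv0$ throughout $B_R\setminus\overline{\Omega}$. The homogeneous transmission conditions on $\partial\Omega$ give $v_1$ zero Cauchy data there, and unique continuation for the $C^2$-coefficient operator $\nabla\cdot(\gamma\nabla\,\cdot\,)+k^2q$ yields $v_1\equiv0$ in $\Omega\setminus\overline{D}$. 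The delicate point is that $\Im q$ may vanish on part of $\Omega\setminus\overline{D}$, so absorption alone is insufficient and one genuinely needs the radiation-condition/Rellich mechanism together with unique continuation.

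\emph{Existence} then follows because \eqref{dtn:bound} is posed on the bounded domain $(\Omega\setminus\overline{D})\cup(B_R\setminus\overline{\Omega})$, so I would cast it variationally and invoke the Fredholm alternative. The sesquilinear form is the sum of the coercive principal part $\int\gamma|\nabla v|^2+\int|\nabla u|^2$, the boundary term from $\Lambda$, and the zeroth-order term $-k^2\int q\,u\overline{\varphi}$; the latter is compact by the Rellich embedding $H^1\hookrightarrow L^2$, and $\Lambda$ differs from a sign-definite model Dirichlet-to-Neumann operator by a smoothing, hence compact, operator. A G\aa rding inequality therefore holds, and the Fredholm alternative reduces existence to the uniqueness established above. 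Transporting the solution back through the equivalence produces the unique $(v,u^s)$ solving \eqref{isotropic1:r}, and this same variational framework supplies the a priori estimate underlying Lemma~\ref{important:lemma}.
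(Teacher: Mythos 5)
Your proposal is correct and follows essentially the same route as the paper: uniqueness via the energy identity, the sign of $\Im\int_{\partial B_R}\partial_\nu u^s\,\bar u^s\,ds$, Rellich's lemma and unique continuation; equivalence via the definition of the Dirichlet-to-Neumann map $\Lambda$; and solvability via the variational/Fredholm framework (which the paper carries out in the proof of Lemma~\ref{important:lemma} rather than here). The only cosmetic difference is that for the converse direction of the equivalence you glue $u_1$ to the radiating extension $W$ and verify the matching of Cauchy data weakly, whereas the paper extends $u_1$ outside $B_R$ through the Green representation formula \eqref{eq:extension}; both arguments are standard and yield the same conclusion.
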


\begin{proof}
{We first show the uniqueness of the solution
$(v, u^s)$
to system \eqref{isotropic1:r}.}
For the purpose
we set $p$, $g_{1}$, $g_{2}$, $f$ to be all zeros. Multiplying the first and second equations of \eqref{isotropic1:r}, respectively, by $\bar{v}$ and $\bar{u}^{s}$, and integrating by parts in $\Omega \backslash \overline{D}$ and $B_{R}\backslash \overline{\Omega}$, together with the use of the boundary conditions on $\partial D$ and $\partial \Omega$, we have
\begin{equation}\label{unique1}
\begin{split}
&-\int_{\Omega \backslash \overline{D}} \gamma|\nabla v|^2 dx + \int_{\Omega \backslash \overline{D}}k^2 q |v|^2 dx - \int_{B_{R}\backslash \overline{\Omega}}|\nabla u^{s}|^2 dx\\
& + \int_{B_{R}\backslash \overline{\Omega}} k^2 |u^{2}|^2 dx + \int_{\partial B_{R}}\frac{\partial u^{s}}{\partial \nu}\bar{u}^{s}ds = 0.
\end{split}
\end{equation}
{
Taking the imaginary part of both sides of \eqref{unique1}, we derive
}
\[
\Im \int_{\partial B_{R}}\frac{\partial u^{s}}{\partial \nu}\bar{u}^{s}ds = - \Im \int_{\Omega \backslash \overline{D}}k^2 q |v|^2 dx \leq 0.
\]
{Then by Rellich's lemma (cf. \cite{ColKre}) we know $u^{s}$ is zero outside $B_{R}$,
which with
the unique continuation implies that} $u^s=0$ in $\Omega\backslash\overline{D}$ and $v=0$ in $D$.

Next we show the equivalence between systems \eqref{isotropic1:r} and
\eqref{dtn:bound}. By the definition of $\Lambda$, {we see that if $(v,u^{s})$ {solves the system} \eqref{isotropic1:r}, then $(v_{1}=v, u_{1}=u^{s}|_{B_{R}\backslash \overline{\Omega}})$ is the solution {to the system \eqref{dtn:bound}}. On the other hand, by applying
the Green's representation
(cf.~\cite{ColKre}(2.4)) to
the solution $(v_{1}, u_{1})$ of \eqref{dtn:bound} we obtain that }
\begin{equation}
\begin{split}
& u_{1}(x) = - \int_{\partial \Omega}\left(\frac{\partial  u_{1}(y)}{\partial \nu(y)}\Phi(x,y) - u_{1}(y)\frac{\partial \Phi(x,y)}{\partial \nu(y)}\right)ds(y)\\
+ & \int_{\partial B_{R}}\left(\Lambda u_{1}(y)\Phi(x,y) - u_{1}(y)\frac{\partial \Phi(x,y)}{\partial \nu(y)}\right) ds(y) -\int_{B_{R}\backslash \overline{\Omega}}f(y)\Phi(x,y)dy,
\end{split}
\end{equation}
for $x\in B_R\backslash\overline{\Omega}$, where
\begin{equation}\label{eq:fun}
\Phi(x,y)=\frac{i}{4}\left(\frac{k}{2\pi|x-y|}\right)^{(N-2)/2}H_{(N-2)/2}^{(1)}(k|x-y|)
\end{equation}
is the outgoing Green's function. By definition of $\Lambda$ and the radiation of $\Phi(x,y)$
{(cf. pp. 98 in \cite{FD}, and \cite{PH})}
\[
\int_{\partial B_{R}}\left(\Lambda u_{1}(y)\Phi(x,y) - u_{1}(y)\frac{\partial \Phi(x,y)}{\partial \nu(y)}\right) ds(y) = 0.
\]
Hence,
\begin{equation}\label{eq:extension}
u_{1}(x) = - \int_{\partial \Omega}\left(\frac{\partial  u_{1}(y)}{\partial \nu(y)}\Phi(x,y) - u_{1}(y)\frac{\partial \Phi(x,y)}{\partial \nu(y)}\right)ds(y) -\int_{B_{R}\backslash \overline{\Omega}}f(y)\Phi(x,y)dy.
\end{equation}
It is clear that $u_1$ can be readily extended to an $H^1_{loc}(\mathbb{R}^N\backslash\overline{\Omega})$ function, which we still denote by $u_1$.  {We can see that} $u_1$
satisfies the Sommerfeld radiation condition, which together with the uniqueness of solution to (\ref{isotropic1:r}) implies that $u_1=u^s$.
\end{proof}

With the uniqueness and equivalence in Lemma \ref{lemma:assit},  we can apply the variational technique to study the reduced problem \eqref{dtn:bound} to prove Lemma \ref{important:lemma}.
\begin{proof}
[Proof of Lemma~\ref{important:lemma}] Without of loss generality, we assume $k^{2}$ is not a Dirichlet eigenvalue of $-\Delta$ in $B_{R}\backslash \overline{\Omega}$, and introduce the following auxiliary system
\begin{equation}\label{shuzhu:v}
\begin{cases}
-\Delta\tilde{v}-k^{2}\tilde{v} = 0 \quad  & \mbox{in \  $B_{R}\backslash \overline{\Omega}$}, \\
\tilde{v} = g_{1}\quad & \mbox{on \ $\partial\Omega$}, \\
\tilde{v} = 0 \quad & \mbox{on \ $\partial B_R$}.
\end{cases}
\end{equation}
It is easy to see
$\|\tilde{v}\|_{H^{1}(B_{R}\backslash \overline{\Omega})} \leq C\|g_{1}\|_{H^{1/2}(\partial \Omega)}$.
We now set
\begin{equation}
w(x):=\begin{cases}
v_{1}(x), \quad & x \in \Omega \backslash \overline{D}, \\
u_{1}(x) + \tilde{v}(x), \quad & x \in B_{R}\backslash \overline{\Omega}.
\end{cases}
\end{equation}
{We can check that $w \in H^{1}(B_{R})$ satisfies the following equation:}
\begin{equation}\label{va:w}
\begin{cases}
\nabla \cdot(\gamma(x)\nabla w)+ k^2 q(x) w = 0 \quad & \mbox{in\ \ $\Omega \backslash \overline{D}$} , \\
\Delta w + k^2 w = f \quad & \mbox{in\ \ $B_{R}\backslash \overline{\Omega}$}, \\
\gamma\frac{\partial w}{\partial \nu} = p  \quad & \mbox{on\ \ $\partial D$},\\
w^{-} = w^{+} \quad & \mbox{on\ \ $\partial \Omega$}, \\
\gamma\frac{\partial w^{-}}{\partial \nu} = \frac{\partial w^{+}}{\partial \nu} + g_{2} - \frac{\partial \tilde{v}}{\partial \nu} \quad & \mbox{on\ \ $ \partial \Omega$}, \\
\frac{\partial w}{\partial \nu} = \Lambda w + \frac{\partial \tilde{v}}{\partial \nu} \quad & \mbox{on\ \ $\partial B_{R}$}.
\end{cases}
\end{equation}
Next, we define $\Lambda_{0}$:
$H^{1/2}(\partial B_{R}) \rightarrow H^{-1/2}(\partial B_{R})$ by
\[
\Lambda_{0}\psi_{1} = \frac{\partial W_{1} }{\partial \nu}\bigg|_{\partial B_{R}},
\]
where $W_{1}\in H_{loc}^1(\mathbb{R}^N\backslash\overline{B}_R)$ is the unique solution of
the system:
\begin{equation}
\begin{cases}
-\Delta W_{1} = 0 \quad & \mbox{in\ \ $\mathbb{R}^N \backslash \overline{B}_{R}$},\\
W_{1} = \psi_{1}\in H^{1/2}(\partial B_{R})\quad & \mbox{on\ \ $\partial B_R$,}
\end{cases}
\end{equation}
{and satisfies} the decay property at infinity, namely
$W_{1} = \mathcal{O}({|x|^{-1}})$ for $N = 3$, and
$W_{1} = \mathcal{O}(\log|x|)$ for $N = 2$, as $|x| \rightarrow +\infty$.

It is known that (cf. \cite{FD} and \cite{PH})
\begin{equation}\label{T0}
-\int_{\partial B_{R}}\bar{\psi}_{1}\Lambda_{0}\psi_{1}ds \geq 0, \quad   \forall \psi_{1} \in H^{1/2}(\partial B_{R}),
\end{equation}
and $\Lambda-\Lambda_{0}$ is compact from $H^{1/2}(\partial B_{R})$ to $H^{-1/2}(\partial B_{R})$.
Then for any $\varphi \in H^{1}(B_{R})$, using the test function $\bar{\varphi}$ we can easily
derive the variational formulation of system \eqref{va:w}: ~find $w\in H^{1}(B_{R})$ such that
\begin{equation}\label{eq:weak}
a_{1}(w,\varphi) +a_{2}(w,\varphi) = {\cal F}(\varphi)
\end{equation}
where the bilinear forms $a_{1}$ and $a_{2}$ and the linear functional
${\cal F}$ are given by
\begin{align}
a_{1}(w,\varphi) :=& \int_{\Omega\backslash \overline{D}}\gamma\nabla w \cdot \nabla \bar{\varphi} dy + \int_{\Omega\backslash \overline{D}} k^2w \bar{\varphi}dy + \int_{B_{R}\backslash \overline{\Omega}}\nabla w \cdot \nabla \bar{\varphi} dy
\nonumber\\
& + \int_{B_{R}\backslash \overline{\Omega}}k^2 w \bar{\varphi}dy - \int_{\partial B_{R}}\Lambda_{0}w \bar{\varphi}ds,\\
a_{2}(w,\varphi) :=& - \int_{\Omega\backslash \overline{D}} k^2(q+1)w \bar{\varphi}dy - 2\int_{B_{R}\backslash \overline{\Omega}}k^2 w \bar{\varphi}dy -  \int_{\partial B_{R}}(\Lambda-\Lambda_{0})w \bar{\varphi}ds,\\
\mathscr{F}(\varphi) :=& -\int_{\partial D}p \bar{\varphi}ds + \int_{\partial \Omega}(g_{2}-\frac{\partial \tilde{v}}{\partial \nu})\bar{\varphi} ds + \int_{\partial B_{R}}\frac{\partial \tilde{v}}{\partial \nu}\bar{\varphi}ds -\int_{B_{R}}f\bar{\varphi}dy.\label{F:right}
\end{align}
Using \eqref{T0} we can readily
verify that for any $\phi, \varphi \in H^{1}(B_{R})$,
\begin{equation}
|a_{1}(\phi, \varphi)| \leq C_{1}\|\phi\|_{H^{1}(B_{R})}\|\varphi\|_{H^{1}(B_{R})} \quad\mbox{and}\quad
a_{1}(\varphi,\varphi) \geq C_{2}\|\varphi\|_{H^{1}(B_{R})}^2
\end{equation}
for some constants $C_{1}$ and $C_{2}$.
Then by Lax-Milgram lemma there exists a {bounded} operator $\mathcal{L} : H^{1}(B_{R}) \rightarrow H^{1}(B_{R})$ such that
\begin{equation}
a_{1}(w,\varphi) = (\mathcal{L}w ,\varphi), \quad \forall \varphi, w \in H^{1}(B_{R}),
\end{equation}
where and in the following, $(\cdot,\cdot)$ denotes the inner product in $H^1(B_R)$.
Moreover, the inverse $\mathcal{L}^{-1}$ exists and is bounded.
By Riesz representation theorem, we also know that there exist bounded operators $\mathcal{K}_{1}, \mathcal{K}_{2}:H^{1}(B_{R})\rightarrow H^{1}(B_{R})$ such that
\begin{equation}\label{a3:func}
 a_{3}(w,\varphi): = \int_{\Omega\backslash \overline{D}} k^2(q+1)w \bar{\varphi}dy + 2\int_{B_{R}\backslash \overline{\Omega}}k^2 w \bar{\varphi}dy = (\mathcal{K}_{1}w,\varphi)
\end{equation}
and
\begin{equation}\label{a4:func}
a_{4}(w,\varphi): = \int_{\partial B_{R}}(\Lambda-\Lambda_{0})w \bar{\varphi}ds = (\mathcal{K}_{2}w, \varphi).
\end{equation}
We now claim that both $\mathcal{K}_1$ and $\mathcal{K}_2$ are compact.
In fact, let $\{w_{n}\}_{n \in \mathbb{N}}$ be a bounded sequence in $H^{1}(B_{R})$ and $\|w_{n}\|_{H^{1}(B_{R})} \leq M$, and we can assume that
$w_{n} \rightharpoonup w_{0}$ in $H^{1}(B_{R})$. Since $H^{1}(B_{R})\hookrightarrow L^{2}(B_{R})$ is compact, we know $w_{n} \rightarrow w_{0}$ in $L^{2}(B_{R})$.
By \eqref{a3:func} {we can write}
\begin{equation}\label{eq:a111}
a_{3}(w_{n}-w_{0}, \varphi) = (\mathcal{K}_{1}(w_{n}-w_{0}),\varphi).
\end{equation}
Taking $\varphi= \mathcal{K}_1(w_n-w_0)$ and using (\ref{a3:func}),
we can verify that
\[
\|\mathcal{K}_{1}(w_{n}-w_{0})\|_{H^{1}(B_{R})} \leq 4Mk^2\max\{\||q+1\|_{L^\infty(\Omega\backslash\overline{D})}, 2\}  \|\mathcal{K}_{1}\| \|w_{n}-w_{0}\|_{L^{2}(B_{R})} \rightarrow 0,
\]
which implies the compactness of $\mathcal{K}_{1}$. In a similar manner, we can prove the compactness of $\mathcal{K}_2$. Indeed, let $w_{n} \rightharpoonup w_{0}$ in $H^{1}(B_{R})$, and by trace theorem, $w_{n}|_{\partial B_{R}} \rightharpoonup w_{0}|_{\partial B_{R}}$ in $H^{1/2}(\partial B_{R})$. Since $\Lambda-\Lambda_{0}: H^{1/2}(\partial B_{R}) \rightarrow H^{-1/2}(\partial B_{R})$ is compact, we see $(\Lambda-\Lambda_{0})w_{n}\rightarrow(\Lambda-\Lambda_{0})w_{0}$ in $H^{-1/2}(\partial B_{R})$.
By \eqref{a4:func} we can write
\[
a_{4}(w_{n}-w_{0}, \varphi) = (\mathcal{K}_{2}(w_{n}-w_{0}),\varphi).
\]
Taking $\varphi= \mathcal{K}_{2}(w_{n}-w_{0})$ and using (\ref{a4:func}), one has
\[
\begin{split}
\|\mathcal{K}_{2}(w_{n}-w_{0})\|_{H^{1}(B_{R})} &\leq \|(\Lambda-\Lambda_{0})(w_{n}-w_{0})\|_{H^{-1/2}(\partial B_{R})}\|\mathcal{K}_{2}(w_{n}-w_{0})\|_{H^{1/2}(\partial B_{R})}\\
&\leq C_{3} M \|(\Lambda-\Lambda_{0})(w_{n}-w_{0})\|_{H^{-1/2}(\partial B_{R})} \|\mathcal{K}_{2}\| \rightarrow 0,
\end{split}
\]
which implies the compactness of $\mathcal{K}_{2}$.

Since $\mathcal{L}$ is bounded and invertible, and $\mathcal{K}_{1} + \mathcal{K}_{2}$ is compact, we know $\mathcal{L}-(\mathcal{K}_1+\mathcal{K}_2)$ is a Fredholm operator of index zero. By the uniqueness of (\ref{isotropic1:r}), $(\mathcal{L}-(\mathcal{K}_1+\mathcal{K}_2))^{-1}$ is bounded. On the other hand, it is straightforward to show
{\small
\[
|F(\varphi)| \leq C(\|p\|_{H^{-1/2}(\partial D)}+\|g_{1}\|_{H^{1/2}(\partial \Omega)}
+ \|g_{2}\|_{H^{-1/2}(\partial \Omega)} + \|f\|_{L^{2}(B_{R_{0}+1}\backslash B_{R_0})})\|\varphi\|_{H^{1}(B_{R})},
\]
}
which readily implies (\ref{eq:est1}).
\end{proof}

The next lemma presents some important a priori estimates of the solution $u_\varepsilon$
to (\ref{eq:transmission problem}) in terms of $\varepsilon$.
\begin{lem}\label{mainlem}
Let $u_\varepsilon\in H_{loc}^1(\mathbb{R}^N)$ be the unique solution to (\ref{eq:transmission problem}). There exists $\varepsilon_0>0$ such that {the following estimates hold for all $\varepsilon<\varepsilon_0$,
\begin{eqnarray}
\|u_{\varepsilon}\|_{H^{1}(B_{R}\backslash \overline{D})} &\leq& C_{1} (\|f\|_{L^{2}(B_{R_{0}} \backslash \overline{\Omega})} +\|u^{i}\|_{H^{1}(B_{R}\backslash \overline {\Omega})})\,, \label{uniform:wellpose1}\\
\sqrt{\varepsilon}\|u_{\varepsilon}\|_{H^{1}(D)} &\leq& C_{2} (\|f\|_{L^{2}(B_{R_{0}} \backslash \overline{\Omega})}+ \|u^{i}\|_{H^{1}(B_{R}\backslash \overline {\Omega})})\label{uniform:wellpose2}
\end{eqnarray}
}
where the constants $C_1$ and $C_2$ are independent of $\varepsilon$.
\end{lem}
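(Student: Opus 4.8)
The plan is to bound the exterior part of $u_\varepsilon$ by Lemma~\ref{important:lemma} and to absorb the interior contribution by exploiting the small factor $\varepsilon$ carried by the flux on $\partial D$ together with the loss term in $D$. First I observe that the pair $(u_\varepsilon|_{\Omega\backslash\overline D},\,u_\varepsilon^s)$ solves exactly the transmission system (\ref{isotropic1:r}) with Neumann datum $p=p_\varepsilon:=\gamma\,\partial u_\varepsilon^+/\partial\nu=\varepsilon\,\partial u_\varepsilon^-/\partial\nu$ on $\partial D$ (by the transmission condition in (\ref{eq:transmission problem})) and with $g_1=u^i|_{\partial\Omega}$, $g_2=\partial u^i/\partial\nu|_{\partial\Omega}$. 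Writing $\mathcal D:=\|f\|_{L^2(B_{R_0}\backslash\overline\Omega)}+\|u^i\|_{H^1(B_R\backslash\overline\Omega)}$ and bounding the $\partial\Omega$-data by $C\mathcal D$ through the trace theorem, Lemma~\ref{important:lemma} gives $\|u_\varepsilon\|_{H^1(\Omega\backslash\overline D)}+\|u_\varepsilon^s\|_{H^1(B_R\backslash\overline\Omega)}\le C(\|p_\varepsilon\|_{H^{-1/2}(\partial D)}+\mathcal D)$, so the task reduces to estimating the single quantity $p_\varepsilon$.

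To estimate $p_\varepsilon$ I would test the interior equation $\nabla\cdot(\varepsilon\nabla u_\varepsilon)+k^2(\eta_0+i\tau_0)u_\varepsilon=0$ against a fixed bounded extension $\psi\in H^1(D)$ of an arbitrary $\xi\in H^{1/2}(\partial D)$ with $\|\psi\|_{H^1(D)}\le C\|\xi\|_{H^{1/2}(\partial D)}$; integration by parts yields $\langle p_\varepsilon,\xi\rangle=\varepsilon\int_D\nabla u_\varepsilon\cdot\nabla\bar\psi-k^2(\eta_0+i\tau_0)\int_D u_\varepsilon\bar\psi$, whence $\|p_\varepsilon\|_{H^{-1/2}(\partial D)}\le C(\varepsilon\|\nabla u_\varepsilon\|_{L^2(D)}+\|u_\varepsilon\|_{L^2(D)})$. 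The decisive feature is that the gradient term carries the full power $\varepsilon$, i.e. $\varepsilon\|\nabla u_\varepsilon\|_{L^2(D)}=\sqrt\varepsilon\,(\sqrt\varepsilon\|\nabla u_\varepsilon\|_{L^2(D)})$; since $\sqrt\varepsilon\|\nabla u_\varepsilon\|_{L^2(D)}$ will be shown bounded, this contribution is $O(\sqrt\varepsilon)$ times a bounded quantity and hence absorbable.

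The two interior norms are supplied by the global energy identity obtained by truncating (\ref{eq:transmission problem}) to $B_R$ with the DtN map $\Lambda$ on $\partial B_R$ and testing with $\bar u_\varepsilon$ (the incident field then contributes only known data on $\partial B_R$). Its imaginary part, using $\tau_0>0$, $\Im q\ge0$ and $\Im\int_{\partial B_R}(\Lambda u_\varepsilon)\bar u_\varepsilon\,ds\ge0$, makes every boundary and exterior term sign-definite and leaves the loss term dominant, giving $\|u_\varepsilon\|_{L^2(D)}^2\le C\mathcal D\,(\|u_\varepsilon^s\|_{H^1(B_R\backslash\overline\Omega)}+\mathcal D)$; here $f$ is supported outside $\Omega$, so only exterior values of $u_\varepsilon$ enter the right-hand side. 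Its real part, after discarding the nonnegative gradient terms over $\Omega\backslash\overline D$ and $B_R\backslash\overline\Omega$ on the left, gives $\varepsilon\|\nabla u_\varepsilon\|_{L^2(D)}^2\le C(\|u_\varepsilon\|_{L^2(D)}^2+\|u_\varepsilon\|_{H^1(\Omega\backslash\overline D)}^2+\|u_\varepsilon^s\|_{H^1(B_R\backslash\overline\Omega)}^2+\mathcal D^2)$.

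Finally I would close the estimates by absorption for $\varepsilon<\varepsilon_0$. Substituting the real-part bound into the flux estimate and then into the Lemma~\ref{important:lemma} inequality, the resulting term $C\sqrt\varepsilon(\|u_\varepsilon\|_{H^1(\Omega\backslash\overline D)}+\|u_\varepsilon^s\|_{H^1(B_R\backslash\overline\Omega)})$ is absorbed into the left-hand side; the residual $\|u_\varepsilon\|_{L^2(D)}$ is controlled by the imaginary-part bound, whose factor $\mathcal D^{1/2}$ lets Young's inequality absorb a further $\tfrac12\|u_\varepsilon^s\|_{H^1(B_R\backslash\overline\Omega)}$. This proves (\ref{uniform:wellpose1}); feeding it back through the two identities gives $\|u_\varepsilon\|_{L^2(D)}\le C\mathcal D$ and $\sqrt\varepsilon\|\nabla u_\varepsilon\|_{L^2(D)}\le C\mathcal D$, which together with $\sqrt\varepsilon\|u_\varepsilon\|_{L^2(D)}\le\sqrt{\varepsilon_0}\,C\mathcal D$ yield (\ref{uniform:wellpose2}). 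I expect the main obstacle to be the degeneracy of the bilinear form as $\varepsilon\to0$: it loses uniform coercivity in $D$, so no Lax--Milgram or Fredholm argument is available uniformly in $\varepsilon$ on all of $B_R$. The remedy is to peel off the interior through the single scalar unknown $p_\varepsilon$, which carries the decisive factor $\varepsilon$, and to rely on the loss $\tau_0>0$ to tame $\|u_\varepsilon\|_{L^2(D)}$; the delicate part is the bookkeeping of the $\varepsilon$-powers so that the coupled inequalities close without circularity.
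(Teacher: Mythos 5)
Your proof is correct, and it rests on the same two pillars as the paper's: the real and imaginary parts of the energy identity over $B_R$ (the paper's \eqref{image}--\eqref{real}), and Lemma~\ref{important:lemma} applied to $(u_\varepsilon|_{\Omega\setminus\overline D},u_\varepsilon^s)$ with the flux $p_\varepsilon=\gamma\,\partial_\nu u_\varepsilon^+=\varepsilon\,\partial_\nu u_\varepsilon^-$ as Neumann datum on $\partial D$. The route differs in two respects. First, the paper closes the argument by normalization and contradiction (assume $\|u_\varepsilon^n\|_{H^1(B_R\setminus\overline D)}\to\infty$, rescale, contradict $\|v_{\varepsilon,n}\|=1$), whereas you close it directly by absorption and Young's inequality; your version produces explicit constants and delivers Theorem~\ref{thm2} as a by-product. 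Second, and more substantively, you estimate $p_\varepsilon$ through its Green's-formula definition, which yields $\|p_\varepsilon\|_{H^{-1/2}(\partial D)}\le C\bigl(\varepsilon\|\nabla u_\varepsilon\|_{L^2(D)}+\|u_\varepsilon\|_{L^2(D)}\bigr)$; the zeroth-order term carries no factor of $\varepsilon$ and must be handled separately. The paper's corresponding step \eqref{partial:D} uses a trace bound $\|\partial_\nu v_{\varepsilon,n}^-\|_{H^{-1/2}(\partial D)}\le \widetilde{C_4}\|v_{\varepsilon,n}\|_{H^1(D)}$ that suppresses this term (the normal trace of an $H^1$ function is only defined via the equation, which reinstates $\varepsilon^{-1}\|v\|_{L^2(D)}$), while the crude bound \eqref{L2D} only gives $\|v_{\varepsilon,n}\|_{L^2(D)}=O(1)$; your sharpened imaginary-part estimate $\|u_\varepsilon\|_{L^2(D)}^2\le C\,\mathcal D\,(\|u_\varepsilon^s\|_{H^1}+\mathcal D)$ together with Young's inequality is exactly what is needed to control this extra term and make the absorption close, so your argument is in this respect tighter than the paper's. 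One small point to tidy: on $\partial B_R$ the Dirichlet-to-Neumann relation and the sign condition $\Im\int_{\partial B_R}(\Lambda\psi)\bar\psi\,ds\ge 0$ apply to the radiating part $u_\varepsilon^s$ rather than to $u_\varepsilon$ itself; the resulting $u^i$-cross terms are all bounded by $\mathcal D$, as in the paper's bookkeeping on $\partial\Omega$.
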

\begin{proof}
%
Multiplying $\bar{u}_\varepsilon$ to the both sides of the first and second equations of \eqref{eq:transmission problem} and integrating over $\Omega$, we have
\begin{equation}\label{inter:omega}
\begin{split}
&-\int_{D}\varepsilon |\nabla u_\varepsilon|^2 dy + \int_{D}k^2(\eta_{0}+ i \tau_{0})|u_\varepsilon|^2 dy
-\int_{\Omega \backslash \overline{D}}\gamma |\nabla u_\varepsilon|^2 dy\\
&+ \int_{\Omega \backslash \overline{D}} k^2 q|u_\varepsilon|^2 dy + \int_{\partial \Omega}\gamma \frac{\partial u_\varepsilon}{\partial \nu}\bar{u}_\varepsilon ds = 0.
\end{split}
\end{equation}
Then multiplying $\bar{u}_\varepsilon^s$ to the both sides of the third equation of \eqref{eq:transmission problem} and integrating over $B_{R}\backslash \overline{\Omega}$, we obtain
\begin{equation}\label{inter:br}
\begin{split}
&-\int_{\partial \Omega}\frac{\partial u_\varepsilon^s}{\partial \nu}\bar{u}_\varepsilon^s ds + \int_{\partial B_{R}}\frac{\partial u_\varepsilon^s}{\partial \nu}\bar{u}_\varepsilon^s ds -\int_{B_{R}\backslash \overline{\Omega}}|\nabla u_\varepsilon^s|^2dy\\
&+\int_{B_{R}\backslash \overline{\Omega}}k^2|u_\varepsilon^s|^2dy   = \int_{B_{R}\backslash \overline{\Omega}} f\bar{u}_\varepsilon^sdy.
\end{split}
\end{equation}
By adding up \eqref{inter:omega} and \eqref{inter:br},
using the corresponding transmission conditions and then
taking the imaginary and real parts of the resulting equation, we derive
\begin{equation}\label{image}
\begin{split}
&\int_{D}k^2\tau_{0}|u_\varepsilon|^2 dy + \int_{\Omega \backslash \overline{D}} k^2 \Im q|u_\varepsilon|^2 dy + \Im\int_{\partial \Omega}\frac{\partial u_\varepsilon^s}{\partial \nu}\bar{u}^i ds + \Im\int_{\partial \Omega}\frac{\partial u^i}{\partial \nu}\bar{u}_\varepsilon^s ds\\
&+  \Im\int_{\partial \Omega}\frac{\partial u^i}{\partial \nu}\bar{u}^i ds + \Im \int_{\partial B_{R}}\frac{\partial u_\varepsilon^s}{\partial \nu}\bar{u}_\varepsilon^s ds= \Im \int_{B_{R}\backslash \overline{\Omega}} f\bar{u}_\varepsilon^sdy
\end{split}
\end{equation}
and
\begin{equation}\label{real}
\begin{split}
&-\int_{D}\varepsilon |\nabla u_\varepsilon|^2 dy + \int_{D}k^2\eta_{0}|u_\varepsilon|^2 dy
-\int_{\Omega \backslash \overline{D}}\gamma |\nabla u_\varepsilon|^2 dy\\
&+ \int_{\Omega \backslash \overline{D}} k^2 \Re q|u_\varepsilon|^2 dy + \Re\int_{\partial \Omega}\frac{\partial u_\varepsilon^s}{\partial \nu}\bar{u}^i ds + \Re\int_{\partial \Omega}\frac{\partial u^i}{\partial \nu}\bar{u}_\varepsilon^s ds \\
&+  \Re\int_{\partial \Omega}\frac{\partial u^i}{\partial \nu}\bar{u}^i ds+\Re \int_{\partial B_{R}}\frac{\partial u_\varepsilon^s}{\partial \nu}\bar{u}_\varepsilon^s ds-\int_{B_{R}\backslash \overline{\Omega}}|\nabla u_\varepsilon^s|^2dy\\
&+\int_{B_{R}\backslash \overline{\Omega}}k^2|u_\varepsilon^s|^2dy = \Re \int_{B_{R}\backslash \overline{\Omega}} f\bar{u}_\varepsilon^sdy.
\end{split}
\end{equation}
From \eqref{image}, one has by direct verification that
\begin{equation}\label{L2D}
\begin{split}
\|u_\varepsilon\|_{L^{2}(D)}^2 \leq & \widetilde{C}\bigg(\|u_\varepsilon\|_{L^{2}(\Omega \backslash \overline{D})}^2 + (\|u^i\|_{H^{1}(B_{R}\backslash \overline{\Omega})} + \|u_\varepsilon^s\|_{H^{1}(B_{R}\backslash \overline{\Omega})})^2\\
&+ \|f\|_{L^{2}(B_{R}\backslash \overline{\Omega})}\|u_\varepsilon^s\|_{H^{1}(B_{R}\backslash \overline{\Omega})}\bigg)\\
\leq &\ 8\widetilde{C}\left(\|u_\varepsilon\|_{H^{1}(B_{R}\backslash \overline{D})}^2+\|u^i\|_{H^{1}(B_{R}\backslash \overline{\Omega})}^2 +\|f\|_{L^{2}(B_{R}\backslash \overline{\Omega})}^2 \right),
\end{split}
\end{equation}
where $\widetilde{C}$ depends only on $\eta_{0}, \tau_{0}, k, q, \Omega, B_{R}$.
We can readily check by \eqref{real}  that
\begin{equation}\label{L2gD}
\begin{split}
\int_{D}\varepsilon |\nabla u_\varepsilon|^2 dy & \leq \widetilde{C_{2}}\bigg(\|u_\varepsilon\|_{L^{2}(D)}^2 + \|u_\varepsilon\|_{H^{1}(B_{R}\backslash \overline{D})}^2+\|u^i\|_{H^{1}(B_{R}\backslash \overline{\Omega})}^2\\
&+\|f\|_{L^{2}(B_{R}\backslash \overline{\Omega})}\|u_{\varepsilon}^s\|_{H^{1}(B_{R}\backslash \overline{\Omega})}\bigg),
\end{split}
\end{equation}
where $\widetilde{C_{2}}$ depends only on $k, \eta_{0}, q, \gamma, \Omega, B_{R}$.
Combining \eqref{L2D} and \eqref{L2gD}, we see that there exists a constant $\widetilde{C_{3}}$ dependent only on $k, q, \eta_{0}, \tau_{0}, \gamma, \Omega, B_{R}$, such that for $\varepsilon<1$,
\begin{equation}\label{esti:interout}
\sqrt{\varepsilon}\|u_\varepsilon\|_{H^{1}(D)} \leq \widetilde{C_{3}}
\left( \|u_\varepsilon\|_{H^{1}(B_{R}\backslash \overline{D})}^2+\|u^i\|_{H^{1}(B_{R}\backslash \overline{\Omega})}^2 +\|f\|_{L^{2}(B_{R}\backslash \overline{\Omega})}^2
\right)^{1/2}\,.
\end{equation}

Next, we prove \eqref{uniform:wellpose1} by contradiction. Suppose \eqref{uniform:wellpose1} is not true, then without loss of generality, we can assume that {for each $n\in\mathbb{N}$, there exist $f^{n}$ and $u^{i}_{n}$ such that $\|f^n\|_{L^{2}(B_{R_{0}} \backslash \overline{\Omega})}+ \|u^{i}_{n}\|_{H^{1}(B_{R}\backslash \overline {\Omega})} = 1$ and the corresponding solution $u_{\varepsilon}^n$ tends to infinity,
i.e.,} $\|u_{\varepsilon}^n\|_{H^{1}(B_{R}\backslash \overline{D})} \rightarrow +\infty$ as $\varepsilon\rightarrow 0^+$.
Let
\begin{equation}\label{scaling1}
\begin{split}
&v_{\varepsilon,n} = \frac{u_{\varepsilon}^n}{\|u_{\varepsilon}^n\|_{H^{1}(B_{R}\backslash \overline{D})}}, \quad v_{\varepsilon,n}^{i} = \frac{u^{i}}{\|u_{\varepsilon}^n\|_{H^{1}(B_{R}\backslash \overline{D})}},\\
& f_{\varepsilon}^{n} = \frac{f^{n}}{\|u_{\varepsilon}^{n}\|_{H^{1}(B_{R}\backslash \overline{D})}}, \quad v_{\varepsilon,n}^{s} = \frac{u_{\varepsilon}^{n,s}}{\|u_{\varepsilon}^n\|_{H^{1}(B_{R}\backslash \overline{D})}}.
\end{split}
\end{equation}
Clearly, $v_{\varepsilon,n} \in H^{1}_{loc}(\mathbb{R}^N)$ is the unique solution of \eqref{eq:transmission problem} with the incident wave {$ v_{\varepsilon,n}^{i}$} and the source $f_{\varepsilon}^{n} $. We have
\begin{equation}\label{V:F}
\|v_{\varepsilon,n}\|_{H^{1}(B_{R}\backslash \overline{D})}= 1,\quad \|f_{\varepsilon}^{n}\|_{L^{2}(B_{R}\backslash \overline{\Omega})} \rightarrow 0, \quad \|v_{\varepsilon,n}^{i}\|_{H^{1}(B_{R}\backslash \overline{\Omega})} \rightarrow 0.
\end{equation}
By a completely similar argument as we did in
deriving (\ref{esti:interout}), we can show that for sufficiently large $n$,
\begin{equation}\label{scal:v}
\begin{split}
\sqrt{\varepsilon}\|v_{\varepsilon,n}\|_{H^{1}(D)} \leq & \widetilde{C_{3}} \left({\|v_{\varepsilon,n}\|_{H^{1}(B_{R}\backslash \overline{D})}^2+\|v_{\varepsilon,n}^i\|_{H^{1}(B_{R}\backslash \overline{\Omega})}^2 +\|f_{\varepsilon}^n\|_{L^{2}(B_{R}\backslash \overline{\Omega})}^2}\right)^{1/2}\\
\leq & \widetilde{C_{3}}\sqrt{2}.
\end{split}
\end{equation}
{By taking the trace and using the transmission condition on $\partial D$ and
(\ref{scal:v}), we know the existence of }
a constant $\widetilde{C_{4}}$ depending only on $D$ such that
\begin{equation}\label{partial:D}
\left\|\gamma \frac{\partial v_{\varepsilon,n}^{+}}{\partial \nu}\right\|_{H^{-1/2}(\partial D)} = \left\|\varepsilon\frac{\partial v_{\varepsilon,n}^{-}}{\partial\nu}\right\|_{H^{-1/2}(\partial D)} \leq \widetilde{C_{4}}\widetilde{C_{3}}\sqrt{2}\varepsilon^{1/2}.
\end{equation}
Noting that $(v_{\varepsilon,n}|_{\Omega\backslash\overline{D}},v_{\varepsilon,n}^{s}|_{\mathbb{R}^N\backslash\overline{\Omega}})$ is the unique solution of \eqref{isotropic1:r} with $p = \gamma\frac{\partial v_{\varepsilon,n}^{+}}{\partial \nu}|_{\partial D}$, $g_{1} = v_{\varepsilon,n}^i|_{\partial \Omega}$, $g_{2} = \frac{\partial v_{\varepsilon,n}^i}{\partial \nu}|_{\partial \Omega}$, then by Lemma \ref{important:lemma} we have
\begin{equation}\label{eq:final}
\begin{split}
&\|v_{\varepsilon,n}\|_{H^{1}(B_{R}\backslash \overline{D})} \\
\leq &\ C\left(\left\|\gamma \frac{\partial v_{\varepsilon,n}^{+}}{\partial \nu}\right\|_{H^{-1/2}(\partial D)} + \left\|f_{\varepsilon}^n\right\|_{L^{2}(B_{R}\backslash \overline{\Omega})}+ \left\|v_{\varepsilon,n}^i\right\|_{H^{1}(B_{R}\backslash \overline{\Omega})}\right).
\end{split}
\end{equation}
By \eqref{V:F}, \eqref{partial:D} and (\ref{eq:final}), {we further derive }
\[
\|v_{\varepsilon,n}\|_{H^{1}(B_{R}\backslash \overline{D})} \rightarrow 0\quad\mbox{as\ \ $\varepsilon\rightarrow 0^+$},
\]
{which contradicts with the equality} $\|v_{\varepsilon,n}\|_{H^{1}(B_{R}\backslash \overline{D})}=1$ and thus proves (\ref{uniform:wellpose1}).

Now by combining \eqref{uniform:wellpose1} with \eqref{esti:interout},
we obtain \eqref{uniform:wellpose2}.
\end{proof}

We are now in a position to present the proofs of Theorems~\ref{thm1}--\ref{thm3}.

\begin{proof}
[Proof of Theorem~\ref{thm2}] This is a direct consequence of \eqref{uniform:wellpose2} in Lemma \ref{mainlem}. Indeed, by taking the trace on $\partial D$, we see
\[
\left\|\frac{\partial u_{\varepsilon}^{-}}{\partial \nu}\right\|_{H^{-1/2}(\partial D)} \leq \widetilde{C}\|u_{\varepsilon}\|_{H^{1}(D)},
\]
where $\widetilde{C}$ depends only on $D$.
Then by the transmission condition on $\partial D$, we readily derive (\ref{eq:trace1}):
\begin{equation*}\label{neumann:D}
\begin{split}
\left\|\gamma\frac{\partial u_{\varepsilon}^{+}}{\partial \nu}\right\|_{H^{-1/2}(\partial D)} = \left\|\varepsilon \frac{\partial u_{\varepsilon}^{-}}{\partial \nu}\right\|_{H^{-1/2}(\partial D)}
\leq \ C \varepsilon^{1/2} \left(\|f\|_{L^{2}(B_{R_{0}} \backslash \overline{\Omega})}+ \|u^{i}\|_{H^{1}(B_{R}\backslash \overline {\Omega})}\right).
\end{split}
\end{equation*}
\end{proof}

\begin{proof}
[Proof of Theorem~\ref{thm1}] Let
$
V = u_\varepsilon - u$, $V^s = u_\varepsilon^s - u^s.
$
One can verify directly that $V$ satisfies equation \eqref{isotropic1:r} with $f=0$, $p =\gamma \frac{\partial V}{\partial \nu} = \gamma\frac{\partial u_{\varepsilon}^{+}}{\partial \nu}|_{\partial D}$ and $g_{1}=g_{2} =0$. Then by Lemma \ref{important:lemma} and Theorem \ref{thm2}, we have
\begin{equation}\label{eq:eee}
\begin{split}
& \|u_\varepsilon-u\|_{H^1(B_R\backslash\overline{D})}=\|V\|_{H^{1}(B_{R}\backslash\overline{D})} \leq C \left\|\gamma\frac{\partial u_{\varepsilon}^{+}}{\partial \nu}\right\|_{H^{-1/2}(\partial D)}\\
\leq & \ C\varepsilon^{1/2} \left(\|f\|_{L^{2}(B_{R_{0}} \backslash \overline{\Omega})}+ \|u^{i}\|_{H^{1}(B_{R}\backslash \overline {\Omega})}\right).
\end{split}
\end{equation}
Finally we know from \cite{ColKre} (pp.21) that
\begin{equation}\label{eq:eee2}
\left(\mathcal{A}_\varepsilon-\mathcal{A}\right)(\hat{x})= \zeta\int_{\partial B_{R}}\left\{V^{s}\frac{e^{-ik\hat{x}\cdot y}}{\partial \nu} - \frac{\partial V^{s}}{\partial \nu}e^{-ik\hat{x}\cdot y}\right\}ds(y), \quad \hat{x} \in \mathbb{S}^{N-1}
\end{equation}
where $\zeta=1/4\pi$ for $N=3$ and $\zeta=\frac{e^{i\frac{\pi}{4}}}{\sqrt{8\pi k}}$ for $N=2$.
Using (\ref{eq:eee}) and (\ref{eq:eee2}), one can derive \eqref{farfield} by some
straightforward estimates.
\end{proof}
\begin{proof}
[Proof of Theorem~\ref{thm3}]
We shall make use of the following integral representation of the wave field inside $D$ (cf.\,\cite{ColKre}):
\begin{equation}\label{eq:integral}
u_\varepsilon(x)  = \int_{\partial D} \left\{\frac{\partial u_{\varepsilon}^{-}}{\partial \nu}(y)G(x,y) - u_{\varepsilon}^{-}(y) \frac{\partial G(x,y)}{\partial \nu(y)}\right\}ds(y), \quad x  \in D,
\end{equation}
where $G(x, y)$ is the fundamental solution corresponding to the first equation of
(\ref{eq:transmission problem})  {and is given by}
\begin{equation}
G(x,y)=\frac{e^{i\tilde{k}|x-y|}}{4 \pi |x-y|} \quad \mbox{for} ~~N =3\,;
\quad G(x,y)= \frac{i}{4}H_{0}^{(1)}(\tilde{k}|x-y|) \quad \mbox{for} ~~N =2\,,
\end{equation}
with $
\tilde{k} = k(a+ib)\varepsilon^{-1/2}.
$

Next, we shall only prove the theorem for the 3D case and the 2D case could be proved in a similar manner.
For $x \in D_{0}$ and $y \in \partial D$, since $|x-y| \geq \delta_0$, it can be verified by straightforward calculations that
\begin{equation}\label{eq:eee3}
\begin{split}
\left|\frac{e^{i \tilde{k}|x-y|}}{4 \pi |x-y|}\right| & \leq \frac{e^{-kb\delta_0 \varepsilon^{-1/2}}}{4\pi \delta_0},\\
\left|\nabla_{y}\frac{e^{i \tilde{k}|x-y|}}{4 \pi |x-y|}\right| & \leq \frac{e^{-kb\delta_0 \varepsilon^{-1/2}}}{4 \pi \delta_0}\left[\frac{k\sqrt{a^2+b^2}}{\varepsilon^{1/2}} + \frac{1}{\delta_0}\right].
\end{split}
\end{equation}
On the other hand, by \eqref{uniform:wellpose2} in Lemma \ref{mainlem} we see that
\begin{equation}\label{eq:e}
\begin{split}
\left\|u_{\varepsilon}^{-}\right\|_{H^{1/2}(\partial D)} & \leq C\varepsilon^{-1/2} \left(\|f\|_{L^{2}(B_{R_{0}} \backslash \overline{\Omega})}+ \|u^{i}\|_{H^{1}(B_{R}\backslash \overline {\Omega})}\right),\\
\left\|\frac{\partial u_{\varepsilon}^{-}}{\partial \nu}\right\|_{H^{-1/2}(\partial D)} & \leq C\varepsilon^{-1/2} \left(\|f\|_{L^{2}(B_{R_{0}} \backslash \overline{\Omega})}+ \|u^{i}\|_{H^{1}(B_{R}\backslash \overline {\Omega})}\right).
\end{split}
\end{equation}
Now using (\ref{eq:eee3}) and (\ref{eq:e}) in (\ref{eq:integral}),
one can obtain (\ref{eq:exponential}) by straightforward calculations.
\end{proof}

\section{A special case and sharpness of convergence estimates}\label{ballcase:check}
In this section, we shall consider a special case of the model system (\ref{eq:transmission problem}):
$D$ is the ball $B_{R_1}$ of radius $R_1$, and  only the subregion $D$ is occupied by
the inhomogeneous medium in the whole space $R^N$, and the rest is the
homogeneous background, so we have $\gamma=1$ and $q=1$ in  (\ref{eq:transmission problem}).
Moreover, we consider the scattering only from plane wave incidence, namely, $f=0$. We shall derive the corresponding estimates of the wave field, which shall demonstrate
the sharpness of our convergence estimates in Section \ref{section:4}.
We will consider only the 3D case while the 2D case could be treated in a similar manner.

{In our current special setting, we can rewrite the equation \eqref{eq:transmission problem} as follows:

Find $u_\varepsilon(x)\in H_{loc}^1(\mathbb{R}^N)$ which solves the system }
\begin{equation}\label{eq:ball}
\begin{cases}
\displaystyle{\nabla\cdot(\varepsilon\nabla u_\varepsilon)+ k ^2(\eta_{0}+i\tau_0)u_\varepsilon=0}\quad & \mbox{in \ $D$},\\
\displaystyle{\Delta u_\varepsilon+k^2  u_\varepsilon=0}\quad & \mbox{in \ $\mathbb{R}^3\backslash\overline{D}$},\\
\ u_\varepsilon(x)=e^{ikx\cdot d}+u_\varepsilon^s(x)\quad & \mbox{in \ $\mathbb{R}^3\backslash\overline{D}$},\\
\ \displaystyle{u_\varepsilon^- = u_\varepsilon^+, \quad\varepsilon\frac{\partial u_\varepsilon^-}{\partial\nu}=\frac{\partial u_\varepsilon^+}{\partial\nu}}\quad & \mbox{on \ $\partial D$},\\
\ \displaystyle{\lim_{|x|\rightarrow \infty}|x|\left\{\frac{\partial u_\varepsilon^s}{\partial |x|}-ik u_\varepsilon^s \right\}=0},
\end{cases}
\end{equation}
and the equation (\ref{eq:soundhard}) with $D$ as a sound-hard obstacle reduces to
\begin{equation}\label{eq:soundhardball}
\begin{cases}
\Delta u + k^{2}u = 0\quad & \mbox{in \ $\mathbb{R}^3\backslash\overline{D}$},\\
u(x)=e^{ikx\cdot d}+u^s(x)\quad & \mbox{in \ $\mathbb{R}^3\backslash\overline{D}$},\\
\displaystyle{\frac{\partial u}{\partial \nu}=0}\quad & \mbox{on \ $\partial D$},\\
\ \displaystyle{\lim_{|x|\rightarrow \infty}|x|\left\{\frac{\partial u^s}{\partial |x|}-ik u^s \right\}=0}.
\end{cases}
\end{equation}

In the sequel, we let $q_{0} = (\eta_{0}+i\tau_{0})/\varepsilon$ and $\sqrt{q_{0}} = \varepsilon^{-1/2}(a+bi)$ with $a>0$, $b>0$. We shall make use of the spherical wave series expansions of the wave fields in (\ref{eq:ball}) and (\ref{eq:soundhardball}), and we refer to \cite{ColKre} for a detailed discussion about spherical wave functions. Let $u_{\varepsilon}(x)$ and $u_{\varepsilon}^{s}$ be given by the following series:
\begin{equation}\label{eq:s1}
\begin{split}
u_{\varepsilon}(x) = & \sum_{n = 0}^{ \infty} \sum_{m = -n}^{n} b_{n}^{m} j_{n}(k\sqrt{q_{0}} |x|) Y_{n}^{m} (\hat{x}), \ \ x \in B_{R_1},\\
u_{\varepsilon}^{s}(x) = & \sum_{n =0}^{\infty} \sum_{m = -n}^{n} a_{n}^{m} h_{n}^{(1)}(k|x|)Y_{n}^{m}(\hat{x}), \ \  x \in  \mathbb{R}^{3} \backslash \overline{B}_{R_1},
\end{split}
\end{equation}
where $\hat{x}=x/|x|$,
and $u^s(x)$ be given by
\begin{equation}\label{eq:s2}
u^s(x)=\sum_{n=0}^\infty\sum_{m=-n}^n c_n^m h_n^{(1)}(k|x|)Y_n^m(\hat{x}),\ \ x\in\mathbb{R}^3\backslash\overline{B}_{R_1}.
\end{equation}
We shall make use of the following series representation of the plane wave
\begin{equation}\label{eq:plane wave}
e^{ik x \cdot d} = \sum_{n = 0}^{\infty} \sum_{m = -n}^{n} i^{n} 4 \pi \overline{Y_{n}^{m}(d)}j_{n}(k|x|)Y_{n}^{m}(\hat{x}).
\end{equation}
By (\ref{eq:s1}) and (\ref{eq:plane wave}), and using the boundary condition on $\partial D$,
we know
\[
c_n^m =  \frac{- i^{n} 4 \pi \overline{Y_{n}^{m}(d)} j_{n}'(kR_{1})}{{h_{n}^{(1)}}'(kR_{1})} .
\]

Next, by the transmission boundary conditions in \eqref{eq:ball} and
comparing the coefficients of $Y_{n}^{m}(\hat{x})$ we derive
\begin{equation}\label{coefficients:hard}
\begin{cases}
&\displaystyle{b_{n}^{m} j_{n}(k \sqrt{q_{0}} R_{1})  = a_{n}^{m} h_{n}^{(1)} (kR_{1}) + i^{n} 4 \pi \overline{Y_{n}^{m}(d)} j_{n}(kR_{1}),} \\
&\displaystyle{\varepsilon k \sqrt{q_{0}} b_{n}^{m} j_{n}'(k\sqrt{q_{0}}R_{1})  = k a_{n}^{m} {h_{n}^{(1)}}'(kR_{1}) + i^{n} k 4 \pi \overline{Y_{n}^{m}(d)} j_{n}'(kR_{1})}.
\end{cases}
\end{equation}
Solving the equation \eqref{coefficients:hard}, we obtain
\begin{equation}\label{coefficients}
\begin{split}
&a_{n}^{m} = \dfrac{i^{n} 4 \pi \overline{Y_{n}^{m}(d)}j_{n}'(kR_{1}) j_{n}(k\sqrt{q_{0}}R_{1}) - \varepsilon\sqrt{q_{0}}i^{n} 4 \pi \overline{Y_{n}^{m}(d)}j_{n}'(k \sqrt{q_{0}}R_{1}) j_{n}(kR_{1})}{\varepsilon\sqrt{q_{0}}j_{n}'(k \sqrt{q_{0}}R_{1})h_{n}^{(1)}(kR_{1}) - {h_{n}^{(1)}}'(kR_{1})j_{n}(k\sqrt{q_{0}}R_{1})}\,, \\
&b_{n}^{m} = \dfrac{-i^{n}4 \pi \overline{Y_{n}^{m}(d)}j_{n}(kR_{1}){h_{n}^{(1)}}'(kR_{1}) +  i^{n} 4 \pi \overline{Y_{n}^{m}(d)}h_{n}^{(1)}(k R_{1})j_{n}'(k R_{1}) }{\varepsilon\sqrt{q_{0}}{j_{n}}'(k \sqrt{q_{0}} R_{1})h_{n}^{(1)}(k R_{1}) - {h_{n}^{(1)}}'(k R_{1})j_{n}(k\sqrt{q_{0}}R_{1})}\,.
\end{split}
\end{equation}

We first consider two wave fields outside $D$ and show the following lemma, which indicates the sharpness
of the estimates  in Theorem~\ref{thm1}.
\begin{lem} For the far field patterns $\mathcal{A}_{\varepsilon}$ and $\mathcal{A}$ corresponding to
the solutions $u_\varepsilon$ and $u$ of systems (\ref{eq:ball}) and (\ref{eq:soundhardball}), we have
\begin{equation}\label{eq:est2}
\left|\mathcal{A}_{\varepsilon}(\hat{x})-\mathcal{A}(\hat{x})\right|
= C_{\cal A}\,\varepsilon^{1/2} +O(\varepsilon),\quad \forall \hat{x}\in\mathbb{S}^2
\end{equation}
where $C_{\cal A}$ depends only on {$\eta_{0}$, $\tau_{0}$, $k$, $R_{1}$, $d$}.
\end{lem}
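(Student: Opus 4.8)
The plan is to read off both far-field patterns from the large-argument asymptotics of the spherical Hankel functions, and then reduce the whole lemma to a careful expansion of the scattering coefficients in \eqref{coefficients}. Since $h_n^{(1)}(t)=\frac{(-i)^{n+1}}{t}e^{it}\bigl(1+O(1/t)\bigr)$ as $t\to\infty$, the series \eqref{eq:s1} and \eqref{eq:s2} give
\[
\mathcal{A}_\varepsilon(\hat{x})=\frac{1}{k}\sum_{n=0}^{\infty}\sum_{m=-n}^{n}a_n^m(-i)^{n+1}Y_n^m(\hat{x}),\qquad
\mathcal{A}(\hat{x})=\frac{1}{k}\sum_{n=0}^{\infty}\sum_{m=-n}^{n}c_n^m(-i)^{n+1}Y_n^m(\hat{x}),
\]
so the task becomes an asymptotic analysis of $a_n^m-c_n^m$ as $\varepsilon\to0^+$.

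The key analytic input concerns the interior argument $k\sqrt{q_0}R_1$. Because $\sqrt{q_0}=\varepsilon^{-1/2}(a+bi)$ with $b>0$, this argument has large positive imaginary part as $\varepsilon\to0^+$. Writing $j_n=\tfrac12(h_n^{(1)}+h_n^{(2)})$ and using that $h_n^{(2)}$ dominates $h_n^{(1)}$ in the upper half-plane, I would establish the asymptotics
\[
\frac{j_n'(k\sqrt{q_0}R_1)}{j_n(k\sqrt{q_0}R_1)}=-i+O(\varepsilon^{1/2}),
\]
and hence that $\beta_n:=\varepsilon\sqrt{q_0}\,j_n'(k\sqrt{q_0}R_1)/j_n(k\sqrt{q_0}R_1)=\sqrt{\varepsilon}\,(b-ai)+O(\varepsilon)$, with the leading term independent of $n$.

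Next I would divide numerator and denominator of \eqref{coefficients} by $j_n(k\sqrt{q_0}R_1)$ to rewrite $a_n^m$ as a function of $\beta_n$ that reduces to $c_n^m$ at $\beta_n=0$, and then Taylor expand about $\beta_n=0$. The first-order coefficient contains the combination $j_n'(kR_1)h_n^{(1)}(kR_1)-j_n(kR_1){h_n^{(1)}}'(kR_1)$, which by the Wronskian identity $j_n'(t)h_n^{(1)}(t)-j_n(t){h_n^{(1)}}'(t)=-i/t^2$ equals $-i/(kR_1)^2$. This collapses the expansion to
\[
a_n^m-c_n^m=\frac{i\,i^n4\pi\,\overline{Y_n^m(d)}}{(kR_1)^2\,\bigl({h_n^{(1)}}'(kR_1)\bigr)^2}\,\beta_n+O(\beta_n^2).
\]
Substituting $\beta_n=\sqrt{\varepsilon}(b-ai)+O(\varepsilon)$, summing over $(n,m)$ via the addition theorem $\sum_{m}\overline{Y_n^m(d)}Y_n^m(\hat{x})=\frac{2n+1}{4\pi}P_n(\hat{x}\cdot d)$, and tracking the phase factor $i\,i^n(-i)^{n+1}=1$, I expect to reach the explicit leading term
\[
\mathcal{A}_\varepsilon(\hat{x})-\mathcal{A}(\hat{x})=\frac{\sqrt{\varepsilon}\,(b-ai)}{k(kR_1)^2}\sum_{n=0}^{\infty}\frac{(2n+1)P_n(\hat{x}\cdot d)}{\bigl({h_n^{(1)}}'(kR_1)\bigr)^2}+O(\varepsilon),
\]
whose modulus yields \eqref{eq:est2}, with $C_{\mathcal A}$ the modulus of the displayed coefficient (depending on $\eta_0,\tau_0$ through $a^2+b^2=\sqrt{\eta_0^2+\tau_0^2}$, and on $k,R_1,d$).

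The main obstacle will be the uniform control of the remainders across the infinite series rather than any single term. I will need quantitative bounds, uniform in $\varepsilon$ and $\hat x$, both for the $O(\varepsilon^{1/2})$ error in the Bessel ratio and for the $O(\beta_n^2)$ terms, together with tail estimates in $n$ that exploit the rapid decay of $j_n(kR_1)$ and the growth of ${h_n^{(1)}}'(kR_1)$, so that the series and all remainders may be summed to an honest $O(\varepsilon)$. A secondary point, needed for genuine sharpness, is to argue that the displayed $\sqrt{\varepsilon}$-coefficient does not vanish, so that the rate $\varepsilon^{1/2}$ predicted in Theorem~\ref{thm1} cannot be improved.
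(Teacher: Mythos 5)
Your proposal is correct and follows essentially the same route as the paper: both read off $\mathcal{A}_\varepsilon-\mathcal{A}$ from the coefficient difference $a_n^m-c_n^m$, reduce everything to the quantity $\varepsilon\sqrt{q_0}\,j_n'(k\sqrt{q_0}R_1)/j_n(k\sqrt{q_0}R_1)$ (the paper's $T(q_0,n)$, your $\beta_n$) whose leading term is $\sqrt{\varepsilon}\,(b-ai)$, and collapse the first-order term via the Wronskian to obtain the $\varepsilon^{1/2}(a^2+b^2)^{1/2}$ leading coefficient summed against $({h_n^{(1)}}'(kR_1))^{-2}$. The only differences are cosmetic (you derive the Bessel-ratio asymptotics from $j_n=\tfrac12(h_n^{(1)}+h_n^{(2)})$ and organize the computation as a Taylor expansion in $\beta_n$, while the paper uses the recurrence for $j_n'$ and manipulates the quotient directly), and your explicit attention to uniform-in-$n$ remainder control is, if anything, more careful than the paper's.
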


\begin{proof}
In fact, by (\ref{eq:s1}) and (\ref{eq:s2}), we have
\begin{equation}\label{far:fie}
\begin{split}
\mathcal{A}_{\varepsilon}(\hat{x}) =& \frac{1}{k}\sum_{n =0}^{\infty} \sum_{m = -n}^{n} \frac{1}{i^{n+1}}a_{n}^{m}Y_{n}^{m}(\hat{x}), \\
\mathcal{A}(\hat{x}) = & \frac{i}{k}\sum_{n=0}^\infty 4\pi\frac{j_n'(kR_{1})}{{h_n^{(1)}}'(kR_{1})} \sum_{m=-n}^{n}\overline{Y_{n}^{m}(d)}Y_{n}^{m}(\hat{x}).
\end{split}
\end{equation}
But it follows from (\ref{coefficients}) that
\begin{equation}\label{coeff:hard}
a_{n}^{m} =  \dfrac{i^{n} 4\pi \overline{Y_{n}^{m}(d)}j_{n}'(kR_{1}) - T(q_{0},n)i^{n}4\pi\overline{Y_{n}^{m}(d)}j_{n}(kR_{1})}
{T(q_{0},n)h_{n}^{(1)}(kR_{1})-
{h_{n}^{(1)}}'(k R_{1})}
\end{equation}
with
\[
T(q_{0},n): = \varepsilon\sqrt{q_{0}}\frac{j_{n}'(k\sqrt{q_{0}}R_{1})}{j_{n}(k\sqrt{q_{0}}R_{1})}.
\]
Next, we derive the asymptotic development of $T(q_{0}, n)$ as $\varepsilon \rightarrow 0^+$.
Noting that $j_{n}'(z) = \dfrac{n}{z} j_{n}(z)-j_{n+1}(z)$ (cf. \cite{ColKre}), we see
\[
j_{n}'(k\sqrt{q_{0}}R_{1}) = \dfrac{n}{k\sqrt{q_{0}}R_{1}}j_{n}(k\sqrt{q_{0}}R_{1})-j_{n+1}(k\sqrt{q_{0}}R_{1}),
\]
then
\begin{equation}\label{eq:2}
\begin{split}
T(q_{0},n) = & \varepsilon \sqrt{q_{0}}\left[\frac{n}{k\sqrt{q_{0}}R_{1}}-\frac{j_{n+1}(k\sqrt{q_{0}}R_{1})}{j_{n}(k\sqrt{q_{0}}R_{1})}\right]
=  \frac{n\varepsilon}{kR_{1}} - \varepsilon\sqrt{q_{0}}\dfrac{j_{n+1}(k\sqrt{q_{0}}R_{1})}{j_{n}(k\sqrt{q_{0}}R_{1})}.
\end{split}
\end{equation}

In virtue of the asymptotic behavior of $j_{n}(z)$ {(cf.
9.2.1 and 10.1.1 \cite{AI}) as $|z| \rightarrow \infty$ and $|\text{arg}\ z| < \pi$, } one has
\begin{equation} \label{asymbessel}
{j_{n}(z)  = \dfrac{1}{z} \{\cos(z -  n\pi/2  - \pi/2)  + e^{|\Im z|}\mathcal{O}(|z|^{-1})\}}\\
\end{equation}
 and as $\varepsilon \rightarrow +0$ (cf. \cite{LiLiuSun}), one also has
\begin{equation}\label{var:1}
\frac{j_{n+1}(k\sqrt{q_{0}}R_{1})}{j_{n}(k\sqrt{q_{0}}R_{1})} \sim e^{i\pi/2}.
\end{equation}
Combining (\ref{eq:2})--(\ref{var:1}), one has by direct calculations
\begin{equation}\label{var:2}
\left|T(q_{0},n)\right| \leq \left|\frac{n\varepsilon}{kR_{1}}\right| + \left|\varepsilon\sqrt{q_{0}}\frac{j_{n+1}(k\sqrt{q_{0}}R_{1})}{j_{n}(k\sqrt{q_{0}}R_{1})}\right|  = \mathcal{O}(n\varepsilon + \sqrt{\varepsilon}).
\end{equation}
Now, by \eqref{far:fie}, we have
\begin{equation}\label{eq:33}
\mathcal{A}_{\varepsilon}(\hat{x}) - \mathcal{A}(\hat{x}) = \frac{i}{k}\sum_{n =0}^{\infty} \sum_{m = -n}^{n} \left\{\frac{-1}{i^{n}}a_{n}^{m} - 4\pi\frac{j_n'(kR_{1})}{{h_n^{(1)}}'(kR_{1})}\overline{Y_{n}^{m}(d)}\right\}Y_{n}^{m}(\hat{x}).
\end{equation}
In the sequel, we let
\[
q_{n}^{m} =  \frac{-1}{i^{n}}a_{n}^{m} - 4\pi\frac{j_n'(kR_{1})}{{h_n^{(1)}}'(kR_{1})}\overline{Y_{n}^{m}(d)}.
\]
By using the Wronskian $j_{n}(t)y_{n}'(t)-j_{n}'(t)y_{n}(t)={1}/{t^2}$, we then have
\[
q_{n}^{m}= \frac{iT(q_{0},n)4\pi \overline{Y_{n}^{m}(d)}} {k^2R_{1}^2[T(q_{0},n)h_{n}^{(1)}(kR_{1})-
{h_{n}^{(1)}}'(k R_{1})]{h_{n}^{(1)}}'(k R_{1})}.
\]
Next by the asymptotic behavior of $h_{n}^{(1)}(kR_{1})$ (cf. \cite{ColKre}),
\[
h_{n}^{(1)}(kR_{1}) \sim \frac{1\cdot3\cdots (2n-1)}{i (kR_{1})^{n+1}}(1+ \mathcal{O}(\frac{1}{n})), \quad n \rightarrow +\infty,
\]
and also using the relation ${h_{n}^{(1)}}'(z) = -h_{n+1}^{(1)}(z) + \dfrac{n}{z}h_{n}^{(1)}(z)$, we have
\begin{equation}\label{eq:11}
q_{n}^{m} \sim i\frac{4\pi \overline{Y_{n}^{m}(d)}} {k^2R_{1}^2{h_{n}^{(1)}}'(k R_{1})^2} \frac{\frac{n\varepsilon}{kR_{1}} - \varepsilon\sqrt{q_{0}}e^{i\pi/2}}{\{ (\frac{n\varepsilon}{kR_{1}} - \varepsilon\sqrt{q_{0}}e^{i\pi/2})\frac{-kR_{1}}{n+1}- 1\}}.
\end{equation}

By (\ref{eq:11}) and (\ref{eq:22}), one readily sees that for sufficiently large $n$ and
small $\varepsilon$,
\begin{equation}\label{eq:44}
q_{n}^{m}Y_{n}^{m}(\hat{x}) \sim  -\frac{4\pi\overline{Y_{n}^{m}(d)}Y_{n}^{m}(\hat{x}) } {k^2R_{1}^2{h_{n}^{(1)}}'(k R_{1})^2}  \varepsilon ^{1/2}(a^2 + b^2)^{1/2}+ \mathcal{O}(\varepsilon),
\end{equation}
so {constant $C_{\cal A}$ in (\ref{eq:est2})} can be chosen as
\[
\left|\sum_{n=1}^{\infty}\sum_{m=-n}^{n}\frac{4\pi\overline{Y_{n}^{m}(d)}Y_{n}^{m}(\hat{x}) } {k^3R_{1}^2{h_{n}^{(1)}}'(k R_{1})^2}  (a^2 + b^2)^{1/2}\right|.
\]
{Noting that for any $n,m\in\mathbb{N}$ (cf. \cite{ColKre}),
\begin{equation}\label{eq:22}
 |\overline{Y_{n}^{m}(d)}Y_{n}^{m}(\hat{x})| \leq \frac{2n+1}{4 \pi},
 \end{equation}
hence $C_{\cal A}$ is bounded.}
Finally, using (\ref{eq:44}) and the asymptotic development of ${h_{n}^{(1)}}'(kR_{1})$ for large $n$ (cf. \cite{ColKre}), one can show (\ref{eq:est2}) from (\ref{eq:33}) by direct calculations.
\end{proof}


Next, we consider the normal velocity of the wave field $u_\varepsilon$ on $\partial B_{R_1}$ and {show that there exists a constant $C_{\cal \nu}$ which depends only on $k, R_{1}, d, \eta_{0}, \tau_{0}$ such that}
\begin{equation}\label{eq:normal velocity}
\left\|\frac{\partial u_{\varepsilon}^{+}}{\partial \nu}\right\|_{H^{-1/2}(\partial B_{R_{1}})}
= C_\nu \,\varepsilon^{1/2} +O(\varepsilon).
\end{equation}
Clearly the estimate (\ref{eq:normal velocity})
shows the sharpness of the estimate in Theorem \ref{thm2}.

In fact, by the transmission condition on $\partial B_{R_1}$ we have
\[
\frac{\partial u_{\varepsilon}^{+}}{\partial \nu} = \varepsilon \frac{\partial u_{\varepsilon}^{-}}{\partial \nu}|_{\partial B_{R_{1}}} =  \varepsilon k\sqrt{q_{0} } \sum_{n = 0}^{ \infty} \sum_{m = -n}^{n} b_{n}^{m} j_{n}'(k\sqrt{q_{0}} R_{1}) Y_{n}^{m} (\hat{x}).
\]
Using the Wronskian relation, $j_{n}(t)y_{n}'(t)-j_{n}'(t)y_{n}(t) ={1}/{t^2}$, we get
\begin{equation}\label{series:boundary}
b_{n}^{m}j_{n}'(k\sqrt{q_{0}}R_{1}) = \frac{-i^{n+1}4\pi \overline{Y_{n}^{m}(d)}}{k^2R_{1}^2\{T(q_{0},n)h_{n}^{(1)}(kR_{1}) -{h_{n}^{(1)}}'(kR_{1})\}} \frac{j_{n}'(k\sqrt{q_{0}}R_{1})}{j_{n}(k\sqrt{q_{0}}R_{1})}.
\end{equation}
By direct calculations we obtain
\begin{equation}\label{eq:s3}
\begin{split}
&\left\|\varepsilon \frac{\partial u_{\varepsilon}^{-}}{\partial \nu}\right\|_{H^{-1/2}(\partial B_{R_{1}})}\\
=& \varepsilon|k\sqrt{q_{0}}| \left({\sum_{n = 0}^{ \infty} \sum_{m = -n}^{n}\left(1+\frac{n(n+1)}{R_{1}^2}\right)^{-1/2}|b_{n}^{m} j_{n}'(k\sqrt{q_{0}} R_{1})R_{1}|^2}\right)^{1/2}.
\end{split}
\end{equation}
Then by \eqref{var:2}, \eqref{series:boundary} and the asymptotic behaviors of $h_{n}^{(1)}(kR_{1})$ and ${h_{n}^{(1)}}'(kR_{1})$ for large $n$ (cf. \cite{ColKre}), one can show that the series involved in (\ref{eq:s3})
converges to
\[
l_0: = \sum_{n = 0}^{ \infty} \sum_{m = -n}^{n}\left(1+\frac{n(n+1)}{R_{1}^2}\right)^{-1/2}\frac{16\pi^2 |\overline{Y_{n}^{m}(d)}|^2 }{k^4R_{1}^2|{h_{n}^{(1)}}'(kR_{1})|^2}
\]
as $\varepsilon\rightarrow 0^+$.
Hence, for $\varepsilon$ sufficiently small we have
\begin{equation}
\left\|\frac{\partial u_{\varepsilon}^{+}}{\partial \nu}\right\|_{H^{-1/2}(\partial B_{R_{1}})}
=\left\|\varepsilon \frac{\partial u_{\varepsilon}^{-}}{\partial \nu}\right\|_{H^{-1/2}(\partial B_{R_{1}})}
= C_\nu \sqrt{\varepsilon} +O(\varepsilon),
\end{equation}
with {$C_\nu= 2k \sqrt{l_0}(a^2+b^2)^{1/2}$.}

Finally, we consider the wave field $u_\varepsilon$ inside $B_{R_{2}} \Subset B_{R_1} $ with $\delta_0=R_{1} - R_{2}> 0$.
By \eqref{eq:s1}, it suffices for us to consider the asymptotic development of $b_{n}^{m}j_{n}(k\sqrt{q_{0}}|x|)$ for $|x| \leq R_{2}$. We first note that
\begin{equation}\label{series:inter}
\begin{split}
& b_{n}^{m}j_{n}(k\sqrt{q_{0}}|x|) = b_{n}^{m} j_{n}(k\sqrt{q_{0}}R_{1}) \frac{j_{n}(k\sqrt{q_{0}}|x|)}{j_{n}(k\sqrt{q_{0}}R_{1})}\\
=&\frac{-i^{n+1}4\pi\overline{Y_{n}^{m}(d)}}{k^2R_{1}^2\{T(q_{0},n)h_{n}^{(1)}(kR_{1}) - {h_{n}^{(1)}}'(kR_{1})\}} \frac{j_{n}(k\sqrt{q_{0}}|x|)}{j_{n}(k\sqrt{q_{0}}R_{1})}.
\end{split}
\end{equation}
By \eqref{asymbessel} one sees that
\begin{equation}\label{eq:6}
|j_{n}(k\sqrt{q_{0}}R_{1})| \sim \frac{e^{kbR_{1}\varepsilon^{-1/2}}}{R_{1}}\, \quad
\mbox{as} ~~ \varepsilon\rightarrow 0^+\,.
\end{equation}
In the sequel, we consider two separate cases for $u_\varepsilon(x)$ with $x \in B_{R_{2}}$.
{First for the case that }
$|k\sqrt{q_{0}}||x|  =k \varepsilon^{-1/2}|a+ib||x|>1$, then $1/|x| \leq k \varepsilon^{-1/2}|a+ib|$,
and we can show
\begin{equation}\label{eq:7}
\left|\frac{j_{n}(k\sqrt{q_{0}}|x|)}{j_{n}(k\sqrt{q_{0}}R_{1})}\right| \sim
 \frac{R_{1}}{|x|}e^{-kb(R_{1}-|x|)/\sqrt{\varepsilon}}
 \le k R_{1} \varepsilon^{-1/2}|a+ib|e^{-kb\delta_0/\sqrt{\varepsilon}}
\end{equation}
as $\varepsilon \rightarrow 0^+$.
Hence by combining (\ref{eq:22}), (\ref{series:inter}) with (\ref{eq:7})
we derive that
\[
\begin{split}
|u_{\varepsilon}(x)| \leq &  \sum_{n = 0}^{ \infty} \sum_{m = -n}^{n} |b_{n}^{m} j_{n}(k\sqrt{q_{0}} |x|) Y_{n}^{m} (\hat{x})|\\
\leq &
\ \displaystyle{k|a+ib|e^{-kb\delta_0/2\sqrt{\varepsilon}}}\sum_{n = 0}^{ \infty} \sum_{m = -n}^{n} \left|\frac{8\pi\overline{Y_{n}^{m}(d)}Y_{n}^{m}(\hat{x})}{k^2R_{1}{h_{n}^{(1)}}'(kR_{1})}\right|,\\
\leq & M_{1}k|a+ib|e^{-kb\delta_0/2\sqrt{\varepsilon}},\quad \forall x\in B_{R_2}
\end{split}
\]
for sufficiently small $\varepsilon$ such that
$\varepsilon^{-1/2}|a+ib| \exp({-kb\delta_0/(2\sqrt{\varepsilon})}) \leq 1$,
where
\[
M_{1}: = \sum_{n = 0}^{ \infty} \sum_{m = -n}^{n} \left|\frac{2(2n+1)}{k^2R_{1}{h_{n}^{(1)}}'(kR_{1})}\right| < +\infty.
\]
{For the other case, if $|k\sqrt{q_{0}}||x| = k \varepsilon^{-1/2}|a+ib||x| \leq 1$,
then using the asymptotic behavior of $j_{n}(z)$ for large $n$ we know
there exists a constant $M_{2}$ such that
\begin{equation}\label{eq:5}
|j_{n}(k\sqrt{q_{0}}|x|)| \leq M_{2}, \quad \forall n \in \mathbb{N}.
\end{equation}
In a similar manner as we did above one can obtain the following exponentially decay estimate
$$
|u_{\varepsilon}(x)| \leq \sum_{n = 0}^{ \infty} \sum_{m = -n}^{n} \left|\frac{2(2n+1)}{k^2R_{1}{h_{n}^{(1)}}'(kR_{1})}\right|M_{2}e^{-kbR_{1}\varepsilon^{-1/2}}
$$}
as $\varepsilon \rightarrow +0$, by using (\ref{series:inter}), (\ref{eq:6}) and (\ref{eq:5}).
This verifies the sharpness of Theorem \ref{thm3}.

\section*{Appendix}

We shall give a proof of the well-posedness of the scattering problem (\ref{eq:transmission problem}), which was also needed in the proof of Lemma~\ref{mainlem}. We could not find a convenient literature
for the results, so for completeness
we present it in this appendix.
Our argument follows the Lax-Phillips method presented in \cite{Isa}.

Let
\begin{equation}
\{\alpha,\beta\}=\begin{cases}
1, 1\quad &\mbox{in\ $\mathbb{R}^N\backslash\overline{\Omega}$},\\
\gamma, q\quad &\mbox{in\ $\Omega\backslash\overline{D}$},\\
\varepsilon, \eta_0+i\tau_0\quad &\mbox{in\ $D$}.
\end{cases}
\end{equation}
Then the scattering problem (\ref{eq:transmission problem}) can be formulated as follows:

{
Find $u\in H_{loc}^1(\mathbb{R}^N)$ such that
 $u=u^i+u^s$ {in} $\mathbb{R}^N\backslash\overline{\Omega}$
 and solves the equation
\begin{equation}\label{eq:1}
\begin{cases}
\mathscr{L} u:=\nabla\cdot(\alpha\nabla u)+k^2 \beta u=f\quad &\mbox{in\ \ $\mathbb{R}^N$},\\
\ \displaystyle{\lim_{|x|\rightarrow \infty}|x|^{(N-1)/2}\left\{\frac{\partial u^s}{\partial |x|}-ik u^s \right\}=0}
\end{cases}
\end{equation}
}
where we assume $supp(f)\subset B_{R_0}\backslash \Omega$.

The uniqueness of the solutions to the system (\ref{eq:1})
can be shown in a similar argument as the one used in the proof of Lemma~\ref{important:lemma}.
Next we show only the existence and stability estimate.

In the following, by appropriately choosing $R_0$ we can assume that
$k^2$ is not a Dirichlet eigenvalue in $B_{R_0+1}$. Let $\theta(x)\in C^\infty(\mathbb{R}^N)$ be a cut-off function such that $\theta(x)=0$ for $|x|<R_0$ and $\theta(x)=1$ for $|x|>R_0+1$.
{
Setting
\begin{equation}\label{eq:w0}
W=u\quad\mbox{in\ ~$\Omega$} \quad
\mbox{and} \quad
W=u^s+(1-\theta)u^i\quad \mbox{in\ ~$\mathbb{R}^N\backslash\overline{\Omega}$},
\end{equation}
we can then verify directly} that $W\in H^1_{loc}(\mathbb{R}^N)$ satisfies
\begin{equation}\label{eq:w}
\begin{cases}
\mathscr{L}W=g \quad \mbox{in\ ~$\mathbb{R}^N$},\\
\ \displaystyle{\lim_{|x|\rightarrow \infty}|x|^{(N-1)/2}\left\{\frac{\partial W}{\partial |x|}-i k W \right\}}=0,
\end{cases}
\end{equation}
with
$
g=-(\Delta+k^2)(\theta u^i)+f\in L^2(B_{R_0+1}\backslash \Omega).
$

Next, we look for a solution to (\ref{eq:w}) of the following form
\begin{equation}\label{eq:w1}
W=w-\phi(w-V),
\end{equation}
where $\phi$ is $C^\infty$ cut-off function such that $\phi=1$ in $B_{R_0}$ and $\phi=0$ in $\mathbb{R}^N\backslash B_{R_0+1}$. We let $V\in H^1(B_{R_0+1})$ be the solution of the system
\begin{equation}
\begin{cases}
\mathscr{L} V=g^*\quad &\mbox{in\ $B_{R_0+1}$},\\
V=0\quad &\mbox{on\ $\partial B_{R_0+1}$}
\end{cases}
\end{equation}
and $w\in H^1_{loc}(\mathbb{R}^N)$ be the solution of the system
\begin{equation}
\begin{cases}
(\Delta+k^2) w=g^*\quad &\mbox{in\ $\mathbb{R}^N$},\\
\displaystyle{\lim_{|x|\rightarrow \infty}|x|^{(N-1)/2}\left\{\frac{\partial w}{\partial |x|}-i k w \right\}}=0,
\end{cases}
\end{equation}
where $g^*\in L^2(B_{R_0+1}\backslash \Omega)$ shall be determined later.

Clearly, by the classical regularity estimates we see
\[
V\in H^2(B_{R_0+1}\backslash\overline{\Omega})\quad\mbox{and}\quad w\in H_{loc}^2(\mathbb{R}^N).
\]
By direct verification we have
\begin{equation}\label{eq:aaa}
\begin{split}
g=&(\Delta+k^2)W= \Delta w+k^2 w+\Delta\phi (w-V)\\
&+ 2\nabla\phi\cdot\nabla(w-V)+\phi\left(\Delta(w-V)+k^2(w-V)\right)\\
=& g^*+K g^*,
\end{split}
\end{equation}
where $K$ is defined to be
$
Kg^*=\Delta \phi(w-V)+2\nabla\phi\cdot\nabla(w-V).
$

We can show that
$K$ is compact from $L^2(B_{R_0+1}\backslash\Omega)$ to itself. We shall make use of the Fredholm theory to show the unique solvability of (\ref{eq:aaa}). It suffices to show the uniqueness of solution to (\ref{eq:aaa}). We set $g=0$. By (\ref{eq:w}) we have $W=0$. Hence $w=\phi(w-V)$ in $\mathbb{R}^N$ and
$V=0$ in $\Omega$ and $w=0$ in $\mathbb{R}^N\backslash B_{R_0+1}$. It is straightforward to verify that
\begin{equation}\label{eq:well}
\begin{cases}
(\Delta+k^2)(V-w)=0\quad & \mbox{in\ \ $B_{R_0+1}$},\\
V-w=0\quad & \mbox{on\ \ $\partial B_{R_0+1}$},
\end{cases}
\end{equation}
hence $V-w=0$. Therefore $w=0$, which then implies that $g^*=0$. Then
by the Fredholm theory
we have a unique $g^*\in L^2(B_{R_0+1}\backslash \Omega)$ to (\ref{eq:aaa}) such that
\[
\|g^*\|_{L^2(B_{R_0+1}\backslash \Omega)}\leq C\|g\|_{L^2(B_{R_0+1}\backslash \Omega)}\leq C\left(\|u^i\|_{H^1(B_{R_0+1}\backslash \overline{\Omega})}+\|f\|_{L^2(B_{R_0}\backslash \Omega)}\right)\,.
\]s
Finally, by {the classical theory on elliptic equations one can show that}
\[
\|u\|_{H^1(B_{R_0+1}\backslash\overline{\Omega})}\leq C\left(\|f\|_{L^2(B_{R_0}\backslash\Omega)}+\|u^i\|_{H^1(B_{R_0+1}\backslash\overline{\Omega})}\right).
\]

\end{document}